\newtheorem{theorem}{Theorem}
\newtheorem{lemma}{Lemma}
\newtheorem{assumption}{Assumption}
\newtheorem{corollary}{Corollary}
\newdefinition{rmk}{Remark}[section]
\let\det\relax
\DeclarePairedDelimiter{\Fnorm}{\lVert}{\rVert_F}
\DeclarePairedDelimiter{\FnormSq}{\lVert}{\rVert_F^2}
\DeclarePairedDelimiter{\norm}{\lVert}{\rVert}
\DeclarePairedDelimiter{\Twonorm}{\lVert}{\rVert_2}
\DeclarePairedDelimiter{\TwonormSq}{\lVert}{\rVert_2^2}
\DeclarePairedDelimiter{\TwoInfnorm}{\lVert}{\rVert_{2,\infty}}
\DeclarePairedDelimiter{\TwoOnenorm}{\lVert}{\rVert_{2,1}}
\DeclarePairedDelimiter\det{|}{|}
\newcommand{\bx}{\boldsymbol{x}}
\newcommand{\bz}{\boldsymbol{z}}
\newcommand{\bepsilon}{\boldsymbol{\varepsilon}}
\newcommand{\bX}{\boldsymbol{X}}
\newcommand{\bY}{\boldsymbol{Y}}
\newcommand{\bXn}{\boldsymbol{X_n}}
\newcommand{\bYn}{\boldsymbol{Y_n}}
\newcommand{\bepsilonn}{\boldsymbol{\varepsilon_n}}
\newcommand{\bBzero}{\boldsymbol{B_0}}
\newcommand{\bB}{\boldsymbol{B}}
\newcommand{\bBn}{\boldsymbol{B_n}}
\newcommand{\bSigma}{\boldsymbol{\Sigma}}
\newcommand{\bzero}{\boldsymbol{0}}
\newcommand{\Nor}{\mathcal{N}}
\newcommand{\InvWis}{\mathcal{IW}}
\newcommand{\Ball}{\mathcal{B}}
\newcommand{\InvbSigma}{\bSigma^{-1}}
\newcommand{\inddist}{\overset{ind}{\sim}}
\newcommand{\bigO}{\mathcal{O}}
\newcommand{\sqrtbSigma}{\bSigma^{1/2}}
\newcommand{\sqrtbSigmaO}{\bSigma_0^{1/2}}
\newcommand{\sqrtbSigmaInv}{\bSigma^{-1/2}}
\newcommand{\sqrtbSigmaOInv}{\bSigma_0^{-1/2}}
\newcommand{\Data}{\mathcal{D}_n}
\newcommand{\E}{\mathbb{E}}
\newcommand{\RomanNumeralCaps}[1]{\MakeUppercase{\romannumeral #1}}
\begin{document}

\begin{frontmatter}
	
	\title{Ultra High-dimensional Multivariate Posterior Contraction Rate under Shrinkage Priors}
	
%

\author[1]{Ruoyang Zhang}
\ead{njiandan@ufl.edu}
\author[1]{Malay Ghosh}
\ead{ghoshm@ufl.edu}

\address[1]{Department of Statistics, University of Florida, Gainesville, FL 32611, United States}

\begin{abstract}
In recent years, shrinkage priors have received much attention in high-dimensional data analysis from a Bayesian perspective. Compared with widely used spike-and-slab priors, shrinkage priors have better computational efficiency. But the theoretical properties, especially posterior contraction rate, which is important in uncertainty quantification, are not established in many cases. In this paper, we apply global-local shrinkage priors to high-dimensional multivariate linear regression with unknown covariance matrix. We show that when the prior is highly concentrated near zero and has heavy tail, the posterior contraction rates for both coefficients matrix and covariance matrix are nearly optimal. Our results hold when number of features p grows much faster than the sample size n, which is of great interest in modern data analysis. We show that a class of readily implementable scale mixture of normal priors satisfies the conditions of the main theorem.
\end{abstract}

\begin{keyword}
	multivariate regression \sep unknown covariance matrix \sep Gaussian scale mixture
\end{keyword}

\end{frontmatter}

\section{Introduction}
Parameter estimation, variable selection and prediction in high dimensional regression models have received significant attention in these days, particularly when the number of regressors $p$ is much larger than the number of observations $n$. Examples abound - brain imaging, microarray experiments, satellite data analysis, just to name a few. In many of these examples, one key issue is to address sparsity of effective regression parameters in the midst of a multitude of inactive ones. For example, there are only a few significant genes associated with Type I diabetes along with million others of no direct impact for such a disease.

In a frequentist framework, the most commonly used approach for inducing sparsity is by imposing regularization penalty on the parameters of interest. The most popular ones are $\ell_1$ (lasso) and $\ell_2$ (ridge) penalties or a combination of these (elastic net). The $\ell_1$ and $\ell_2$ regularization can naturally be extended to multivariate case where sparsity in the coefficient matrix is desired. 
\citet{rothman2010sparse} used $\ell_1$ penalties on each entry of the coefficient matrix as well as on each off-diagonal element of the covariance matrix. 
\citet{wilms2018algorithm} considered a model which put an $\ell_2$ penalty on the rows of coefficient matrix to shrink the entire row to zero, and an $\ell_1$ penalty on the off-diagonal elements of the inverse error covariance matrix. 
\citet{li2015multivariate} proposed a multivariate sparse group lasso imposing $\ell_2$ penalty on the rows of the regression matrix and in addition an $\ell_1$ penalty on individual coefficient of the regression matrix to perform sparse estimation and variable selection both at the between and within group levels. 

In a Bayesian setting, spike-and-slab priors, originally introduced by \citet{mitchell1988bayesian} have become very popular for handling sparsity. Spike-and-slab priors are mixture densities with positive mass at zero to force some parameters to be zero, and a continuous density to model the nonzero coefficients. These priors have been used in a variety of contexts. 
For example, for Bayesian Group Lasso, \citet{xu2015bayesian} used these priors for both variable selection and estimation. This work was extended by \citet{liquet2017bayesian} to the multivariate case. 
More recently, \citet{rovckova2018spike} introduced spike-and-slab lasso for variable selection and estimation. \citet{deshpande2019simultaneous} extended it to multivariate case by putting spike-and-slab prior on each entry of the coefficient matrix as well as on each off-diagonal element of the precision matrix.

Spike-and-slab priors face severe computational challenges, when $p$, the number of regressors, is very large. This is due to the fact that one needs to search over $2^p$ possible models. \citet{bai2018high} provided an alternative to these priors by introducing global-local shrinkage priors. These priors approximate the spike-and-slab priors well and are usually much easier to implement because they are continuous. Like spike-and-slab priors, global-local shrinkage priors also put significant probability around zero, but retain heavy enough tails so that the true signals are very unlikely to be missed. 

\citet{bai2018high} considered the case when the number of regressors can grow at a sub-exponential rate when compared to the sample size. They established posterior consistency of their prior and showed that the insignificant regression coefficients converge to zero at an exponential rate. \citet{song2017nearly} provided some general posterior contraction rates in the context of variable selection and estimation in univariate regression models with unknown variance. 

Our paper is a follow-up of the works by \citet{bai2018high} and \citet{song2017nearly}. In particular, unlike the former, we do not need to assume a known covariance matrix in the original regression model to establish exponential convergence rate of tail probabilities. We propose a set of general conditions on continuous prior for achieving nearly-optimal posterior contraction rate for both coefficient matrix and covariance matrix. This extends the work of \citet{song2017nearly} to the multivariate case. Also,  we have demonstrated that these regulatory conditions are satisfied by a general class of global-local shrinkage priors. Our technical results borrowed tools developed by \citet{song2017nearly}, but handling multivariate data presented some new challenges in proving the results.

\citet{ning2018bayesian} also addressed the issue of variable selection with unknown covariance matrix and established posterior consistency result similar to ours. But their results are based on spike-and-slab priors instead of global-local shrinkage priors and utilized different techniques from ours.

This paper is organized as follows. In Section 2, we establish general conditions on priors for achieving nearly-optimal posterior contraction rate for both coefficient matrix and covariance matrix. In Section 3, a class of global-local shrinkage prior that satisfies these general conditions is proposed. In Section 4, finite sample performance of the proposed model is evaluated through numerical experiments. Some final remarks are made in Section 5. Most of the technical theorems and lemmas are relegated to the Appendix.

\section{Posterior Contraction Rate} 
\subsection{Problem Setting}
We consider the following multivariate linear regression model
\begin{equation}
\label{regmodel}
Y_i = X_i\bBn + \varepsilon_i\sqrtbSigma_n, \quad i=1,\cdots,n
\end{equation}
where $Y_i$ is a $1\times q_n$ response vector, and the correlation of responses is assumed to be captured by the $q_n \times q_n$ covariance matrix  $\bSigma_n$. $\bBn$ is a $p_n\times q_n$ coefficient matrix, $X_i$ is a $1\times p_n$ regressor vector, $\varepsilon_i$ is a $1\times q_n$ noise vector. Throughout this paper, $\varepsilon_i$'s are assumed to have i.i.d multivariate normal $\Nor(0,I_{q_n})$ distribution, $i=1,\cdots,n$. Subscripts $n$ denotes that the quantity can vary with $n$. In matrix form, Model \eqref{regmodel} can be written as
\begin{equation}
\label{regmodel_mat}
\bYn = \bXn\bBn + \bepsilonn\sqrtbSigma_n
\end{equation}
where $\bYn = (Y_1^T,\cdots,Y_n^T)^T$, $\bXn = (X_1^T,\cdots,X_n^T)^T$ and 
$\bepsilonn = (\varepsilon_1^T,\cdots,\varepsilon_n^T)^T$.
\\
Throughout the paper,  for notational simplicity, subscript $n$ for $\bY_n$, $\bX_n$ and $\bB_n$ will be dropped when there is no ambiguity.

For estimation of $\bB$ and $\bSigma$, we consider the following Bayesian multivariate linear regression model. This model puts independent prior on each row vector of $\bB$ conditioning on $\bSigma$ and an Inverse-Wishart prior for $\bSigma$. General conditions for $\pi(\bB|\bSigma)$ for establishing a satisfying posterior contraction rate of  $\bB$ and $\bSigma$ is given in Theorem \eqref{thm:posterior contraction}.
\begin{align}\label{general model}
\begin{split}
Y_i | X_i, \bB, \bSigma & \inddist \Nor_{q_n}(X_i\bB, \bSigma)  \quad i=1,\cdots,n \\
\bB_j | \bSigma &\inddist \pi(\bB_j|\bSigma) \quad j=1,\cdots,p_n\\
\bSigma & \sim \InvWis_{q_n}(\upsilon, \Phi)
\end{split}
\end{align}
where $\bB_j$ is the $jth$ row of $\bB$. $\InvWis_{q_n}(\upsilon, \Phi)$ means a $q_n$-dimensional Inverse-Wishart distribution with degree of freedom $\upsilon>q_n-1$ and a $q_n \times q_n$ positive definite scale matrix $\Phi$.

\subsection{Notations}
First, a few notations used throughout the paper are defined. We write $a\lor b$ for $\max(a,b)$, where $a$ and $b$ are real numbers. Letters $C, c, k$ with subscripts denote generic positive constants that do not depend on $n$. For two sequences of positive real numbers $a_n$ and $b_n$, $a_n \lesssim b_n$ is equivalent to $a_n =\bigO(b_n)$, i.e. there exists constant $C>0$ such that $a_n \leq C b_n$ for all large $n$. $a_n \prec b_n$ means $a_n =o(b_n)$, that is, $a_n/b_n \to 0$ as $n \to \infty$. $a_n \simeq b_n$ denotes that there exists constants $0<C_1\leq C_2$ such that $C_1 b_n \leq a_n \leq C_2 b_n$.

For a vector $\bx \in \mathbb{R}^p$, $\Twonorm{\bx}$ denotes the $\ell_2$ norm. For a $n\times m$ real matrix $A$ with entries $a_{ij}$, $\Fnorm{A} \coloneqq \sqrt{tr(A A^T)}$ denotes the Frobenius norm of $A$; $\TwoInfnorm{A} \coloneqq \max_{1\leq i \leq n} \big(\sum_{j=1}^{m} a_{ij}^2 \big)^{1/2}$ denotes $A$'s maximum row length; $\TwoOnenorm{A} \coloneqq \sum_{i=1}^{n}\big(\sum_{j=1}^{m} a_{ij}^2 \big)^{1/2}$ denotes the sum of row lengths. For a symmetric real matrix $A$, $\lambda_i(A)$ denotes the $ith$ smallest eigenvalue of $A$. $\norm{A} = \lambda_{\max}(A)$ denotes the spectral norm of $A$, which is also the maximum eigenvalue of $A$.

\subsection{ Conditions for Posterior Contraction Rate}
Suppose the data $\bY$ is generated by \eqref{regmodel} with the true regression parameter $\bBzero$ and the true dispersion matrix $\bSigma_0$. To achieve posterior contraction rate, we first state some assumptions for sparsity of $\bBzero$, the eigen-structure of design matrix $\bX$, and eigenvalues of $\bSigma_0$.

\begin{assumption} \label{assumption:beta}
	Sparsity of $\bBzero$:\\
	$A_1$: $s_0\log p_n \prec n$, where $s_0$ is the size of the true model, i.e., the number of nonzero rows in $\bBzero$.
\end{assumption}

\begin{assumption} \label{assumption:X}
	Eigen-structure of the design matrix $\bX$:\\
	$A_2(1)$: Entries $(\bX)_{ij}$ in design matrix are uniformly bounded. For simplicity, assume they are bounded by 1.\\
	$A_2(2)$: $p_n \to \infty$ as $n\to\infty$.\\
	$A_2(3)$: There exist some integer $\bar{p}$(depending on $n$ and $p_n$) and fixed constant $\lambda_0$ such that $\bar{p}\succ s_0$, and $\lambda_{\min}(X_S^T X_S)\geq n\lambda_0$ for any subset model $S$ with $\det{S}\leq\bar{p}$.
\end{assumption}

\begin{assumption} \label{assumption: bSigma}
	Dimension and eigenvalues of $\bSigma_0$:\\
	$A_3(1)$: $q_n\lesssim \log p_n$.\\
	$A_3(2)$: $q_n^2\log n\prec n$.\\
	$A_3(3)$: $b_1\leq \lambda_i(\bSigma_0) \leq b_2, \quad i=1,\cdots, q_n.$
\end{assumption}

\begin{rmk}
Assumption \eqref{assumption:X} and Assumption \eqref{assumption:beta} are the same as in \cite{song2017nearly}. Note that $A_2(2)$ does not restrict the rate of $p_n$ going to infinity. Along with $A_1$, $p_n$ can grow sub-exponentially fast with $n$ when $s_0$ is finite, e.g., $\log(p_n)\lesssim n^c$ for some $c\in(0,1)$, which is the ultrahigh dimensional setting in \cite{bai2018high}. 
\end{rmk}

\begin{rmk}
$A_3(2)$ and $A_3(3)$ are the same as in \cite{ning2018bayesian}. Different from many previous settings where the dimension of response $q_n$ is a fixed constant (\cite{bai2018high}, \cite{liquet2017bayesian}), here we allow $q_n$ to grow with $n$. However, the growth of $q_n$ is limited by constraints $A_3(1)$ and $A_3(2)$. When $q_n$ is a fixed constant, $A_3(1)$ and $A_3(2)$ are trivially satisfied.
\end{rmk}

\begin{theorem} \label{thm:posterior contraction}
	For the multivariate Bayesian model given in \eqref{general model},  suppose design matrix $\bX$ satisfies Assumption \eqref{assumption:X} and true parameter $(\bBzero,\bSigma_0)$ satisfies Assumptions \eqref{assumption:beta} and \eqref{assumption: bSigma}. Let the prior density of $\bB$ be:
	\[
	\pi(\bB|\bSigma)=\prod_{i=1}^{p_n} \{\det{\bSigma}^{-1/2}g_{\tau}(\bB_j\bSigma^{-1/2})\}
	\] 
	If $g_\tau(\bx)$ satisfies
	\begin{align}
	&\int_{\Twonorm{\bx} \geq a_n} g_\tau(\bx)d\bx \leq p_n^{-(1+u)} \mbox{ for some } u>0, \label{cond:highlynear0}\\
	&\log\big(\inf_{\Twonorm\bx \leq M_0} g_\tau(\bx)\big) \gtrsim -\log p_n \label{cond:fattail}
	\end{align}
	where $M_0=\gamma\TwoInfnorm{\bBzero\sqrtbSigmaOInv}, \; \gamma>1$, $a_n \simeq \epsilon_n/p_n$.\\
	
	Then the following posterior contraction result holds
	\[
	\Pi_n( \Fnorm{(\bB-\bBzero)\bSigma_0^{-1/2}} \geq M \epsilon_n | \bY) \to 0 \mbox{ in } P_{(\bBzero, \bSigma_0)}\mbox{-probability}
	\]
	\[
	\Pi_n( \norm{\bSigma-\bSigma_0} \geq M\norm{\bSigma_0}\epsilon_n | \bY) \to 0 \mbox{ in } P_{(\bBzero, \bSigma_0)}\mbox{-probability}
	\]
	where $\epsilon_n = \sqrt{s_0\log p_n/n} \lor \sqrt{q_n^2\log n/n} \lor \sqrt{q_ns_0\log n/n}$ and $M$ is a sufficiently large constant.
\end{theorem}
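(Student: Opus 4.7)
The plan is to apply the standard Ghosal--Ghosh--van der Vaart posterior-contraction recipe to the joint parameter $(\bB,\bSigma)$: construct a sieve $\mathcal{F}_n$ whose complement has negligible prior mass, bound its $\epsilon_n$-entropy by a multiple of $n\epsilon_n^2$, exhibit exponentially powerful tests for $(\bBzero,\bSigma_0)$ against the complement of the claimed neighborhood restricted to $\mathcal{F}_n$, and lower-bound the prior mass of a Kullback--Leibler neighborhood of $(\bBzero,\bSigma_0)$ by $e^{-c n\epsilon_n^2}$. The three hypotheses of the theorem (concentration near zero, fat tail, and Inverse--Wishart with bounded scale) will be used to deliver these ingredients in turn.

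For the sieve I would take
\[
\mathcal{F}_n = \bigl\{(\bB,\bSigma) : |S_{a_n}(\bB)|\le\bar p,\ \TwoInfnorm{\bB\sqrtbSigmaInv}\le E_n,\ b_1/2\le \lambda_1(\bSigma),\ \lambda_{q_n}(\bSigma)\le 2b_2\bigr\},
\]
where $S_{a_n}(\bB)$ collects the rows of $\bB$ with $\ell_2$ norm exceeding $a_n$ and $E_n$ is a mild inflation of $M_0$. Condition \eqref{cond:highlynear0} yields a binomial-tail bound showing $\Pi_n(|S_{a_n}(\bB)|>\bar p)$ is $o(e^{-(C+4)n\epsilon_n^2})$ as soon as $\bar p\succ s_0$, and Inverse--Wishart tail estimates together with Assumption \ref{assumption: bSigma} handle the eigenvalue constraints. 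The entropy is then dominated by first enumerating supports ($\log\binom{p_n}{\bar p}\lesssim\bar p\log p_n$) and then covering at most $\bar p$ active rows in the ball of radius $E_n$ plus the $\bSigma$-component in spectral norm; Assumptions \ref{assumption:beta} and \ref{assumption: bSigma} together deliver $\log N(\epsilon_n,\mathcal{F}_n,d)\lesssim n\epsilon_n^2$. For the KL concentration I would combine condition \eqref{cond:fattail} applied to the $s_0$ active rows of $\bB\sqrtbSigmaInv$ (each constrained to a ball of radius $\delta_n\asymp\epsilon_n/\sqrt{s_0}$ around the corresponding true row, contributing prior density at least $e^{-C\log p_n}\delta_n^{q_n}$) with condition \eqref{cond:highlynear0} for the inactive rows within $a_n$ of zero, plus a constant-probability Inverse--Wishart event for $\bSigma$. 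The resulting log prior mass is $\gtrsim -s_0\log p_n - s_0 q_n\log(\sqrt{s_0}/\epsilon_n) \gtrsim -n\epsilon_n^2$, the second term being absorbed by the $q_n s_0\log n$ contribution to $n\epsilon_n^2$, and a Taylor expansion of the Gaussian log-density converts the event into the usual KL and KL-variance bounds of order $\epsilon_n^2$.

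The main obstacle, and the main novelty relative to \citet{song2017nearly} (univariate) and \citet{bai2018high} (known $\bSigma$), is the simultaneous handling of $\bB$ and $\bSigma$ in the test construction and subsequent metric conversion. The squared Hellinger distance between $\Nor_{q_n}(X_i\bB,\bSigma)$ and $\Nor_{q_n}(X_i\bB',\bSigma')$ entangles a design-weighted quadratic form in $(\bB-\bB')\sqrtbSigmaInv$ with a nonlinear matrix function of $\sqrtbSigmaInv\bSigma'\sqrtbSigmaInv$, so convergence in the average Hellinger distance does not immediately yield the two separate contraction statements. Inside $\mathcal{F}_n$, however, the eigenvalues of $\bSigma$ are pinched in $[b_1/2,2b_2]$; combined with the restricted eigenvalue condition $A_2(3)$ of Assumption \ref{assumption:X}, this should let me prove a decoupling lemma giving a two-sided comparison between the averaged squared Hellinger distance and $\Fnorm{(\bB-\bB')\sqrtbSigmaOInv}^2/n + \FnormSq{\bSigma-\bSigma'}$, with constants depending only on $b_1,b_2,\lambda_0$. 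The target bound on $\norm{\bSigma-\bSigma_0}$ then follows from $\norm{\cdot}\le\Fnorm{\cdot}$ and the boundedness of $\norm{\bSigma_0}$ by $b_2$. Establishing this decoupling cleanly, without accruing spurious factors of $q_n$ that would degrade the rate, is the technical heart of the argument.
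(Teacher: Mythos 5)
Your proposal follows the classical Ghosal--Ghosh--van der Vaart recipe (sieve, entropy, KL neighbourhood, Hellinger tests), whereas the paper follows the Song--Liang template: it never computes an entropy at all, but instead builds two \emph{explicit} families of tests indexed by candidate models $S\supset S_0$ with $|S|\le\tilde p+s_0$ --- a residual-sum-of-squares test $\phi^{(1)}_{n,S}$ that directly controls $\norm{\bSigma-\bSigma_0}$ and an OLS test $\phi^{(2)}_{n,S}$ that directly controls $\Fnorm{(\bB-\bBzero)\sqrtbSigmaOInv}$ --- and union-bounds over the $\binom{p_n}{\tilde p}$ supports. That construction completely sidesteps the two things you flag as hard in your sketch: (i) the Hellinger-to-parameter ``decoupling'' lemma, because each test is already calibrated in the target metric; and (ii) the entropy calculation, because the union bound costs only $\tilde p\log p_n$, not $\tilde p q_n\log(\cdot)$.

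This last point is where your sketch has a genuine gap as written. You assert $\log N(\epsilon_n,\mathcal F_n,d)\lesssim n\epsilon_n^2$, but covering the $\tilde p q_n$ free coordinates of the active rows in a ball of radius $E_n$ at Hellinger scale $\epsilon_n$ costs roughly $\tilde p q_n\log(E_n/\epsilon_n)\asymp\tilde p q_n\log n$, and with $\tilde p\simeq n\epsilon_n^2/\log p_n$ this is $\lesssim n\epsilon_n^2$ only if $q_n\log n\lesssim\log p_n$, which is strictly stronger than Assumption~\eqref{assumption: bSigma}'s $q_n\lesssim\log p_n$. You would need the \emph{local} entropy version of GGvdV (where the per-coordinate cost is $O(1)$, not $O(\log n)$, and only the support enumeration contributes $\tilde p\log p_n$) to avoid this; your sketch does not make that distinction. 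Two further fixable but non-cosmetic issues: the sieve should cap the effective support at $\tilde p\simeq n\epsilon_n^2/\log p_n$ rather than at the restricted-eigenvalue radius $\bar p$ (the condition ``$\bar p\succ s_0$'' alone does not keep $\bar p\log p_n\lesssim n\epsilon_n^2$); and the $/n$ in the claimed comparison ``Hellinger$^2 \asymp \Fnorm{(\bB-\bB')\sqrtbSigmaOInv}^2/n + \FnormSq{\bSigma-\bSigma'}$'' is dimensionally off --- after averaging over $i$ and invoking $A_2(3)$ the correct leading term is $\lambda_0\FnormSq{(\bB-\bB')_S\sqrtbSigmaOInv}$ with no $1/n$. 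In short, your route can probably be made to work, but only with the local-entropy refinement and the decoupling lemma fully worked out; the paper's explicit-test construction is chosen precisely so that neither of those is needed.
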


\begin{rmk}
Conditions \eqref{cond:highlynear0}) and \eqref{cond:fattail} for $g_\tau(\cdot)$ have intuitive interpretation. \eqref{cond:highlynear0}  means that the prior has to be highly concentrated around a small neighborhood of $\mathbf{0}$, which corresponds to the sparsity structure of the model. Taking $\TwoInfnorm{\bBzero\sqrtbSigmaOInv}$ as the strength of true signal, \eqref{cond:fattail} means that the prior needs to put enough mass around the true signal, which is often referred as heavy-tail condition in \cite{polson2010shrink, armagan2013posterior, armagan2013generalized}. 
\end{rmk}

\begin{rmk}
When $q_n$ is a fixed constant, the contraction rate $\epsilon_n$ becomes $\sqrt{s_0\log p_n/n}$, which is the same as the univariate optimal posterior contraction rates for regression coefficient with respect to $\ell_1$ and $\ell_2$ norm in \cite{castillo2015bayesian, rovckova2018bayesian}, where spike-and-slab priors are used.  In addition, this rate is also comparable to the minimax rate $\sqrt{s_0\log (p_n/s_0)/n}$ of lasso and Dantzing selector for $\ell_2$ loss in $\ell_0$ ball \cite{raskutti2011minimax, ye2010rate}. Two additional terms $\sqrt{q_n^2\log n/n}$ and $\sqrt{q_ns_0\log n/n}$ that may slower the convergence can be viewed as a compensation of allowing  $q_n\to\infty$. 
\end{rmk}

\begin{rmk}
 By the fact that $\Twonorm{\bSigma_0} \leq \Fnorm{\bSigma_0}$ and $\Fnorm{\bSigma-\bSigma_0} \leq \sqrt{q_n}\Twonorm{\bSigma-\bSigma_0}$, we get $\Pi_n( \Fnorm{\bSigma-\bSigma_0} \geq M\Fnorm{\bSigma_0}\sqrt{q_n}\epsilon_n | Y) \to 0 \mbox{ in } P_{(\bBzero, \bSigma_0)}\mbox{-probability}$. Further, if $q_n$ is a constant, the posterior contraction rate of $\bSigma$ under Frobenius norm is also $\sqrt{s_0\log p_n/n}$.
\end{rmk}

The complete proof of Theorem \eqref{thm:posterior contraction} is provided in Appendix. Here we briefly summarize the ideas and key steps. We applied the tools developed in \cite{song2017nearly}. To extend univariate contraction results to multivariate case, spectral norm is used for measuring matrix distance. With its relation to Frobenius norm, we are able to make straightforward interpretations. 

For showing the posterior contraction results, auxiliary sets $A_n$, $B_n$ and $C_n$ are constructed as follow.  
\begin{equation*}
\begin{aligned}
A_n = &\{\mbox{at least } \tilde{p} \mbox{ entries } \norm{\bB_j\bSigma^{-1/2}} \mbox{ is larger than } a_n\} \\
\cup  &\{\norm{\bSigma-\bSigma_0} \geq M(\norm\bSigma \lor \norm{\bSigma_0})\epsilon_n \} \\
\cup  &\{\Fnorm{(\bB-\bBzero)\bSigma_0^{-1/2}} \geq M \epsilon_n \}
\end{aligned}
\end{equation*}

Define $B_n = \{\mbox{at least } \tilde{p} \mbox{ entries }  \Twonorm{\bB_j\bSigma^{-1/2}} \mbox{ is larger than } a_n\}$,  
and $C_n=A_n \setminus B_n$. Let $\theta = (\bB,\bSigma)$ and $\theta_0 = (\bBzero,\bSigma_0)$. It suffices to show $P_{\theta_0}(\Pi_n(A_n|\bY)\geq  e^{-\tilde{c_1} n\epsilon_n^2} \big) \leq e^{-\tilde{c_2} n\epsilon_n^2}$. By Lemma A.4 in \cite{song2017nearly}, the proof is composed of three parts: \\
(1) Construction of test $\phi_n$ satisfies $\E_{\theta_0} \phi_n \leq e^{-k_2 n\epsilon_n^2}$ and $\sup_{\theta \in C_n} \E_{\theta} (1-\phi_n) \leq e^{-k_3 n\epsilon_n^2}$. \\
(2) Showing event $B_n$ has very small probability under the specified prior. \\
(3) Demonstrating the marginal probability of data is highly likely to be bounded away from 0 if data is generated with true parameters. Probability bounds of Inverse Wishart distribution\cite{ning2018bayesian} are applied in this part.

\subsection{Variable Selection Consistency}
Different from spike-and-slab priors, continuous global-local shrinkage priors put zero probability at the point $\mathbf{0}$, so the solution is not sparse naturally. In this subsection, variable selection criteria and corresponding selection consistency property are discussed. But we want to point out that variable selection is not always required. Sometimes in practice, lacking exact zeros is deemed to be more realistic and preferred\citep{stephens2009bayesian}.

By Condition \eqref{cond:highlynear0} in Theorem \eqref{thm:posterior contraction}, where $a_n$ acts like a partition for ``spike" and ``slab" parts, the posterior model selection rule is set to be $S_n = \{j:\norm{\bB_j\bSigma^{-1/2}}>a_n\}$. Consistency of the selection rule is established in the following theorem. 

\begin{theorem} \label{thm:selection_consistency}
	Suppose assumptions and conditions for Theorem \eqref{thm:posterior contraction} hold with $a_n \prec \sqrt{\log p_n / n}/p_n$ and $u>1$ in \eqref{cond:highlynear0}. Let $\mathcal{B}_{j,\epsilon_n}:=Ball(\bB_j\sqrtbSigmaInv, c_0\epsilon_n)$, i.e., a $q_n$-dimensional ball centering at $\bB_j\sqrtbSigmaInv$ with radius $c_0\epsilon_n$, where $c_0>0$ is a constant. Let $S_n$ be the posterior subset model: $S_n=\{j:\norm{\bB_j\bSigma^{-1/2}}>a_n\}$.
	Suppose $\bBzero$ and $g_\tau(\bx)$ satisfies
	\begin{align}
	& \min_{j\in S_0} \norm{\bBzero_j} \geq M_1 \epsilon_n \text{ for some large constant } M_1, 
	\label{cond:min_signal} \\
	& s_0\log{l_n}\prec \log p_n \label{cond:flatness_g}
	\text{ where } l_n = \max_{j\in{S_0}} \sup_{\bx_1, \bx_2 \in \mathcal{B}_{j,\epsilon_n}}\dfrac{g_\tau(\bx_1)}{g_\tau(\bx_2)},
	\end{align}
	
	then $\Pi_n(S_n = S_0|\bY) \to 1$ in $P_{(\bBzero, \bSigma_0)}$.
\end{theorem}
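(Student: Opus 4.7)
The plan is to split the event $\{S_n \neq S_0\}$ into the underselection event $U_n = \{S_0 \not\subseteq S_n\}$ and the overselection event $V_n = \{S_n \not\subseteq S_0\}$, and bound each posterior mass separately in $P_{(\bBzero, \bSigma_0)}$-probability.

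For the underselection direction, the main tool is Theorem \eqref{thm:posterior contraction} itself. On the high-probability posterior event where both $\Fnorm{(\bB - \bBzero)\sqrtbSigmaOInv} \leq M\epsilon_n$ and $\norm{\bSigma - \bSigma_0} \leq M\norm{\bSigma_0}\epsilon_n$, Assumption \eqref{assumption: bSigma} yields uniform eigenvalue bounds $c_1 \leq \lambda_i(\bSigma) \leq c_2$. Thus for any $j \in S_0$, the triangle inequality together with condition \eqref{cond:min_signal} gives
\[
\norm{\bB_j\sqrtbSigmaInv} \geq \norm{\bBzero_j\sqrtbSigmaInv} - \norm{(\bB_j - \bBzero_j)\sqrtbSigmaInv} \geq c\, M_1 \epsilon_n - M'\epsilon_n,
\]
where $c, M'$ depend only on the eigenvalue bounds (the second term is controlled by converting $\sqrtbSigmaInv$ to $\sqrtbSigmaOInv$). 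Taking $M_1$ large and using $a_n \prec \epsilon_n$, this exceeds $a_n$, so $j \in S_n$, proving $\Pi_n(U_n|\bY)\to 0$.

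For the overselection direction, which is the main obstacle, apply a union bound
\[
\E_{\theta_0}\Pi_n(V_n|\bY) \leq \sum_{j \notin S_0} \E_{\theta_0}\Pi_n(\norm{\bB_j\sqrtbSigmaInv} > a_n \mid \bY),
\]
and write each summand as a ratio of marginal likelihoods. For the numerator, restrict integration to $\{\norm{\bB_j\sqrtbSigmaInv} > a_n\}$; by independence of the rows of $\bB$ under the prior and by condition \eqref{cond:highlynear0}, this tail mass is at most $p_n^{-(1+u)}$. For the denominator, re-use the lower evidence bound developed in part (3) of the sketch after Theorem \eqref{thm:posterior contraction} (which relied on Inverse-Wishart probability bounds from \cite{ning2018bayesian}), and invoke condition \eqref{cond:flatness_g}: on a shrinking $\epsilon_n$-neighborhood of $(\bBzero, \bSigma_0)$ the prior density is within a factor $l_n^{s_0}$ of its value at the truth, and the hypothesis $s_0\log l_n \prec \log p_n$ makes this factor negligible compared to the numerator bound. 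Summing the resulting $\bigO(p_n^{-(1+u)})$ bound over $p_n - s_0$ null indices and using $u>1$ gives $\Pi_n(V_n|\bY) \lesssim p_n^{-(u-1)} \to 0$.

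The hard part is the overselection step: Theorem \eqref{thm:posterior contraction} only controls $\Fnorm{(\bB - \bBzero)\sqrtbSigmaOInv}$ at the aggregate rate $\epsilon_n$, whereas variable selection requires each inactive row to lie below the much finer threshold $a_n \simeq \epsilon_n/p_n$. This is exactly why the proof needs a delicate balance between prior tail decay from \eqref{cond:highlynear0} (with the strengthened requirement $u>1$, to beat the $p_n$ factor from the union bound) and prior flatness near the truth from \eqref{cond:flatness_g} (to keep the denominator from collapsing), rather than any further shrinkage of the posterior itself.
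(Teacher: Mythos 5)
Your Part I (underselection) is essentially the paper's Part I and is fine: on the Theorem~1 contraction event, the triangle inequality plus the eigenvalue bounds in $A_3(3)$ and condition~\eqref{cond:min_signal} push every active row's $\norm{\bB_j\sqrtbSigmaInv}$ above $a_n$.

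The overselection step is where the proposal has a real gap. You treat $\Pi_n(\norm{\bB_j\sqrtbSigmaInv}>a_n\mid\bY)$ as if it were controlled by the prior tail mass $p_n^{-(1+u)}$ and then merely absorb a $p_n$ factor from the union bound, concluding $u>1$ suffices. That accounting is wrong twice over. First, summing $p_n^{-(1+u)}$ over $p_n-s_0$ indices already vanishes for any $u>0$ (and is $p_n^{-u}$, not $p_n^{-(u-1)}$), so your argument would prove the result without the strengthened hypothesis $u>1$, a sign that something is missing. Second and more importantly, the posterior mass on a spurious coordinate exceeding $a_n$ is not just a prior tail: the likelihood \emph{rewards} adding a coordinate. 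The paper's Parts~II--III integrate out $(\bB_{S_0},\bSigma)$ under the normal/Inverse--Wishart conjugacy, producing a ratio of $\det{K_n(\cdot)+\Phi}^{(\upsilon+n-s_0)/2}$ terms. Comparing $S'=S_0\cup\{j\}$ with $S_0$, the residual determinant ratio contributes a factor of order $\big((n-s_0)/(n-s')\big)^{q_n(\upsilon+n-s_0)/2}\simeq\exp\bigl(c\,\det{S'\setminus S_0}\log p_n\bigr)$ with $c\ge 1$. This $p_n^c$ likelihood gain per spurious variable is exactly what the prior penalty $p_n^{-(1+u)}$ must overcome, which is why $u>1$ (so that $u>c$) is the operative requirement — not the union bound. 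Without this Bayes-factor computation your numerator-over-denominator bound never closes, and the ``prior flatness'' factor $l_n^{s_0}$ from condition~\eqref{cond:flatness_g} only enters once this machinery is set up (it bounds the ratio $\overline{\pi(\bB_1\mid\bSigma)}/\underline{\pi(\bB_1\mid\bSigma)}$ over the contraction neighborhood). You should also sum over supersets $S'\supseteq S_0$ rather than over individual coordinates $j\notin S_0$, since the posterior model ratio naturally decomposes that way.
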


\begin{rmk}
Compared to Theorem \eqref{thm:posterior contraction}, this theorem holds with a more concentrated prior peak implied by smaller $a_n$ and larger $u$. In addition, condition \eqref{cond:min_signal} requires minimal strength of the true coefficients. Intuitively, if a true parameter is too small, it would be hard to distinguish it from zero. In condition \eqref{cond:flatness_g}, $l_n$ can be viewed as a measurement of ``flatness" of $g_\tau(\bx)$ around true coefficients. With enough mass around the truth and this flatness constraint, we can get sufficiently large prior density for points in a small neighborhood of the true parameters. With stronger conditions than Theorem \eqref{thm:posterior contraction}, this result also gives a stronger posterior contraction that the false coefficients are bounded by $a_n$. As $a_n \to 0$ when $n \to \infty$, the false coefficients will diminish to zero in the limit. 
\end{rmk}

\section{Extended MBSP Model with Unknown Covariance Matrix}\label{EMBSP}
In previous section, we establish general conditions on the priors to obtain good posterior contraction and variable selection. In this section, we will propose a class of global-local shrinkage prior that satisfies conditions \eqref{cond:highlynear0} and \eqref{cond:fattail}. 

This class of priors we propose is scale mixture of Gaussians, which is closely related to the Multivariate Bayesian model with Shrinkage Priors (MBSP) introduced by \cite{bai2018high}. In MBSP, $\bSigma$ is assumed to be fixed and known. Here we put an Inverse-Wishart prior for $\bSigma$, extending it to the unknown $\bSigma$ case and obtain the following Extended MBSP model:
\begin{align}\label{model: Extended MBSP}
\begin{split}
\bB_j | \xi_j, \bSigma &\inddist \Nor_{q_n}(0, \tau_n\xi_j\bSigma) \quad j=1,\cdots,p_n \\
\xi_j &\inddist \pi(\xi_j)\\
\bSigma & \sim \InvWis_{q_n}(\upsilon, \Phi)
\end{split}
\end{align}

In univariate case($q_n=1$), many priors can be expressed as scale mixtures of Gaussians\cite{Tang2018}. Table \eqref{table: mixingcomponent}  lists such priors and corresponding mixing density $\pi(\xi)$. 
\begin{table}[!htbp]
	\centering
	\caption{List of scale mixtures of Gaussian priors}
	\label{table: mixingcomponent}
	\begin{tabular}{ll}	
		\hline
		prior       & $\pi(\xi)$  \\ \hline 
		Student's t &  $\xi^{-a-1}\exp(-a/\xi)$  \\ 
		TPBN \cite{armagan2011generalized}       &  $\xi^{u-1}(1+\xi)^{-a-u}$  \\
		Horseshoe   \cite{carvalho2010horseshoe} &  $\xi^{-1/2}(1+\xi)^{-1}$     \\
		NEG   \cite{griffin2010inference}      &  $(1+\xi)^{-a-1}$        \\ 
		GDP	\cite{armagan2013generalized}		& 
		$\int_{0}^{\infty}\frac{\lambda^2}{2}\exp(-\frac{\lambda^2\xi}{2})\lambda^{2a-1}\exp(-\eta\lambda)  d\lambda $ \\ 
		HIB	\cite{polson2012half}	& $\xi^{u-1}(1+\xi)^{-a-u}\exp(-\frac{s}{1+\xi})(\phi^2+\frac{1-\phi^2}{1+\xi})^{-1} $ \\ 
		Horseshoe+ \cite{bhadra2017horseshoe+}  & $\xi^{-1/2}(\xi-1)^{-1}\log\xi$    \\ 
		\hline
	\end{tabular}
\end{table}

We now show that when the mixing component $\pi(\xi)$ follows certain polynomial-tailed distribution, posterior contraction is obtained with proper global shrinkage parameter $\tau_n$.

\begin{theorem}\label{thm: scalemixureprior}
Suppose $\bB$ follow the following prior:
\begin{align}
\begin{split}
\bB_j | \xi_j, \bSigma &\inddist \Nor_{q_n}(0, \tau_n\xi_j\bSigma) \quad j=1,\cdots,p_n \\
\xi_j &\inddist \pi(\xi_j)
\end{split}
\end{align}
where $\pi(\xi_j)$ is a polynomial-tailed distribution taking the form $\pi(\xi_j)=K\xi_j^{-r}L(\xi_j)$, $\; r>1, \; K>0$. If $L(\xi)$ satisfies either of the two following conditions for all $\xi>0$: \\
$(C1)\;  1-C_{11}\xi^{-t} \leq L(\xi) \leq C_{12}, \quad C_{12}\geq1, \; C_{11}>0, \; t>0$;\\
$(C2)\;  C_{21}\xi^{-t}\leq L(\xi) \leq 1, \quad C_{21}>0,\; t>0$,\\
then \eqref{cond:highlynear0} and \eqref{cond:fattail} hold with $\tau_n \lesssim a_n^2p_n^{-(1+u')/(r-1)}$ for some $u'>0$ and $\log\tau_n \gtrsim -\log p_n$.
\end{theorem}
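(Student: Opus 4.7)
The plan is to work from an explicit form of $g_\tau$. Under the prior, conditional on $\xi_j$ the vector $\bB_j\sqrtbSigmaInv$ is $\Nor_{q_n}(0,\tau_n\xi_j I_{q_n})$, so integrating out $\xi_j$ gives, for $\bx\in\mathbb{R}^{q_n}$,
\[
g_\tau(\bx)=\frac{K}{(2\pi\tau_n)^{q_n/2}}\int_0^\infty \xi^{-r-q_n/2}L(\xi)\exp\!\Big(-\frac{\Twonorm{\bx}^2}{2\tau_n\xi}\Big)\,d\xi .
\]
From this representation, conditions \eqref{cond:highlynear0} and \eqref{cond:fattail} reduce respectively to an upper bound on a marginal tail probability and a lower bound on a truncated mixture integral in a ball.

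\textbf{Tail condition (\ref{cond:highlynear0}).} I would swap the order of integration to write
\[
\int_{\Twonorm{\bx}\ge a_n}g_\tau(\bx)\,d\bx=\int_0^\infty P\!\big(\chi^2_{q_n}\ge a_n^2/(\tau_n\xi)\big)\,\pi(\xi)\,d\xi ,
\]
and split at a threshold $\xi^*\asymp a_n^2/(q_n\tau_n)$. On $(0,\xi^*]$ the chi-squared tail is exponentially small in $q_n$ by Laurent--Massart type bounds; on $(\xi^*,\infty)$ I bound the conditional probability by $1$ and use the polynomial tail $\pi(\xi)\lesssim \xi^{-r}$ (available under both (C1) and (C2), since in both cases $L$ is bounded above by a constant) to obtain $\int_{\xi^*}^\infty\pi(\xi)\,d\xi\lesssim (\xi^*)^{-(r-1)}\asymp (q_n\tau_n/a_n^2)^{r-1}$. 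Choosing $\tau_n\lesssim a_n^2\,p_n^{-(1+u')/(r-1)}$ with $u'$ chosen slightly smaller than $u$ makes this at most $p_n^{-(1+u)}$, since the extra $q_n^{r-1}$ factor is only polylogarithmic in $p_n$.

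\textbf{Heavy-tail condition (\ref{cond:fattail}).} I would produce a uniform lower bound for $g_\tau(\bx)$ on $\Twonorm{\bx}\le M_0$ by restricting the mixture integral to a short band $\xi\in[\xi_1,2\xi_1]$ with $\xi_1=M_0^2/\tau_n$. On this band the Gaussian factor is bounded below by $e^{-1/2}$, and the prefactor $(2\pi\tau_n)^{-q_n/2}\xi^{-q_n/2}$ telescopes to order $M_0^{-q_n}$, leaving an estimate of the form $g_\tau(\bx)\gtrsim M_0^{-q_n}(\tau_n/M_0^2)^{r-1}L(\xi_1)$. Under (C1) the factor $L(\xi_1)$ is bounded below by a positive constant; under (C2) it contributes an extra $\xi_1^{-t}\ge p_n^{-c}$ for some constant $c$. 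Taking logarithms and using $\log\tau_n\gtrsim -\log p_n$, $q_n\lesssim \log p_n$, and the mild growth of $M_0$ in $p_n$, yields $\log(\inf_{\Twonorm{\bx}\le M_0}g_\tau(\bx))\gtrsim -\log p_n$.

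\textbf{Main obstacle.} The delicate bookkeeping is handling the growing dimension $q_n$ inside the exponents $(2\pi\tau_n)^{-q_n/2}$ and $\xi^{-q_n/2}$: these must cancel against each other in the lower bound, which is what forces the choice $\xi_1=M_0^2/\tau_n$, and their interaction with $\chi^2_{q_n}$ concentration is what drags the factor $q_n$ into the threshold $\xi^*$ in the tail bound. A secondary subtlety is controlling $M_0=\gamma\TwoInfnorm{\bBzero\sqrtbSigmaOInv}$, which itself can grow slowly with $n$; one must verify that this growth is at most polylogarithmic so that the term $q_n\log M_0$ remains $\bigO(\log p_n)$ rather than blowing up to $(\log p_n)^2$. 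Once these two pieces of bookkeeping are in place, the hypotheses (C1) and (C2) enter only through mild lower bounds on $L$ over the band $[\xi_1,2\xi_1]$, and the polynomial rate $r>1$ supplies the decisive power $(a_n^2/\tau_n)^{r-1}$ needed for \eqref{cond:highlynear0}.
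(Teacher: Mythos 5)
Your proof takes a genuinely different route from the paper's. The paper computes the mixture integral defining $g$ in closed form (as a Gamma function) to get a pointwise bound $g(\bx)\lesssim \Gamma(q_n/2+r-1)\pi^{-q_n/2}\Twonorm{\bx}^{-(q_n+2r-2)}$ and then integrates over the region $\Twonorm{\bx}\ge a_n/\sqrt{\tau_n}$ using a polar-coordinates volume lemma; you instead apply Fubini to reduce $\int_{\Twonorm{\bx}\ge a_n}g_\tau$ to $\int_0^\infty P(\chi^2_{q_n}\ge a_n^2/(\tau_n\xi))\,\pi(\xi)\,d\xi$ and split at a threshold. Both approaches arrive at the same key quantity $(\tau_n/a_n^2)^{r-1}$ times a factor that is at most polylogarithmic in $p_n$. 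For the heavy-tail condition, the paper again integrates exactly and isolates a correction factor tending to $1$, whereas you truncate the mixture to a dyadic band $[\xi_1,2\xi_1]$ with $\xi_1=M_0^2/\tau_n$; this is more elementary and yields the same leading term $M_0^{-(q_n+2r-2)}\tau_n^{r-1}$.

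One point where your write-up is imprecise and could mislead: for the contribution from $(0,\xi^*]$, saying the chi-squared tail is ``exponentially small in $q_n$'' is not by itself sufficient, because the target bound is $p_n^{-(1+u)}$ and $q_n$ may be much smaller than $\log p_n$ (e.g.\ constant). Bounding the tail by its supremum over $(0,\xi^*]$ gives only $e^{-cq_n}$, which need not be $o(p_n^{-1})$. The argument does work, but via the integral: using $P(\chi^2_{q_n}\ge x)\le e^{-x/4}$ for $x\ge 8q_n$ and $\pi(\xi)\lesssim\xi^{-r}$, a change of variables $v=a_n^2/(4\tau_n\xi)$ gives $\int_0^{\xi^*}e^{-a_n^2/(4\tau_n\xi)}\xi^{-r}d\xi\lesssim(\tau_n/a_n^2)^{r-1}\Gamma(r-1,2q_n)\lesssim(\tau_n/a_n^2)^{r-1}$, which is actually sharper than the $(\xi^*,\infty)$ piece and has no $q_n$ factor. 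You should spell this out; the threshold and tail decay need to cooperate with the polynomial density, not stand alone. Your flag on controlling $q_n\log M_0$ is well taken --- the paper simply writes $-d\log(\sqrt{\pi}M_0)\gtrsim -\log p_n$ citing $d\lesssim\log p_n$, which implicitly treats $\log M_0$ as bounded; making that assumption explicit, as you suggest, would tighten the argument.
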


\begin{rmk}
It is easy to see that $-\log p_n \lesssim \log(a_n^2p_n^{-(1+u')/(r-1)})$, therefore such $\tau_n$ must exist.
\end{rmk}

\begin{rmk}
Many commonly used shrinkage priors satisfy either $(C1)$ or $(C2)$. As shown in Table \eqref{table: bounds for L}, mixing component $\pi(\xi)$ of student's t, TPBN (horseshoe, NEG are special cases of TPBN) and HIB satisfies $(C1)$; horseshoe+ satisfies $(C2)$. Proofs of these bounds are provided in Appendix.
\end{rmk}

\begin{rmk}
For application, we recommend using TPBN prior. It has been shown in \cite{bai2018high} that TPBN prior is easy for implementation using Gibbs sampling and relevant computation $R$ package MBSP is readily available. They compared their simulation results with other high-dimensional multivariate models.
\end{rmk}

\begin{table}[!htbp]
	\centering
	\caption{Bounds for $L(\xi)$}
	\label{table: bounds for L}
	\begin{tabular}{llll}	
		\hline\hline
		prior       & L($\xi$) & lower bound & upper bound   \\ \hline 
		Student's t &  $\exp(-a/\xi)$ & $1-a\xi^{-1}$ &1 \\ \hline
		TPBN        &  $(\xi/(1+\xi))^{a+u}$ & $1-(a+u)\xi^{-1}$ & 1  \\
		Horseshoe   &  $\xi/(1+\xi)$ & $1-\xi^{-1}$    & 1     \\
		NEG         &  $(\xi/(1+\xi))^{a+1}$ & $1-(a+1)\xi^{-1}$    & 1        \\ \hline
		GDP			& $\begin{aligned}
		&\int_{0}^{\infty} t^a\exp(-t-\eta\sqrt{2t/\xi})dt \\ 
		&\times 1 / \Gamma(a+1)\end{aligned}$ 
		& $1-\sqrt{2}\eta\frac{\Gamma(a+3/2)}{\Gamma(a+1)}x^{-1/2}$  & 1\\ \hline
		HIB			& $\begin{aligned}
		&\exp(-\frac{s}{1+\xi})(\phi^2+\frac{1-\phi^2}{1+\xi})^{-1}\\
		&\times (\frac{\xi}{1+\xi})^{a+u} (1\lor\phi^2)e^s
		\end{aligned}$
		&  $1-(a+u)\xi^{-1}$ & $(\phi^2\lor\frac{1}{\phi^2}) e^s$ \\ \hline
		Horseshoe+   & $\xi^{3/4}(\xi-1)^{-1}\log\xi / 4$  &     $\xi^{-1/4}/4$   & 1   \\ 
		\hline\hline
	\end{tabular}
\end{table}

\section{Numerical Experiments and Data Analysis}
Through numerical experiments, we examine the uncertainty assessment for covariance matrix estimate using scale mixture of Gaussians proposed in Section \ref{EMBSP}. We explore how the difference between estimation and truth varies as $n$ and $p_n$ grows. More simulations that evaluate performances on coefficient matrix reconstruction, prediction as well as variable selection under various situations are presented in \citet{bai2018high}. A real data analysis is also given.

\subsection{Numerical Experiments}
In our simulation, horseshoe mixing density $\pi(\xi)=\xi^{-1/2}(1+\xi)^{-1}$ is used. We focus on performance on ultra high-dimensional and ultra-sparse setting, where $p$ is approximately $n^{1.5}$, proportion of nonzero coefficients ranges from $0.38\%$ to $1.6\%$. Six different experiments settings are listed below.\\

\noindent Experiment 1: $n=25, \; p=125, \; d=3, \; s_0=2, \; s_0/p=1.6\%$.\\
Experiment 2: $n=50, \; p=354, \; d=3, \; s_0=3, \; s_0/p=0.85\%$.\\
Experiment 3: $n=75, \; p=650, \; d=3, \; s_0=4, \; s_0/p=0.62\%$.\\
Experiment 4: $n=100, \; p=1000, \; d=3, \; s_0=5, \; s_0/p=0.5\%$.\\
Experiment 5: $n=125, \; p=1398, \; d=3, \; s_0=6, \; s_0/p=0.43\%$.\\
Experiment 6: $n=150, \; p=1837, \; d=3, \; s_0=7, \; s_0/p=0.38\%$.\\

In all six experiments, data are generated according to the multivariate linear regression model \eqref{regmodel}. Each row of $\bX$ is generated independently from $\Nor_p(\mathbf{0}, \mathbf{\Gamma})$, where $\mathbf{\Gamma} _{ij} = 0.5^{|i-j|}$. The true coefficient matrix $\bBzero$ is generated by uniformly selecting $s_0$ nonzero rows , and other rows are set to be zero. For nonzero rows, each entry is independently sampled from $Unif([-5, -0.5]\cup[0.5, 5])$. The true covariance matrix $(\bSigma_0)_{ij}=\sigma^2(0.5)^{|i-j|}, \;  \sigma^2=2$. 

By Theorem \eqref{thm: scalemixureprior}, when Assumption (1)-(3) holds, the global shrinkage parameter $\tau_n$ for nearly minimax posterior contraction rate should satisfy $\tau_n \lesssim a_n^2p_n^{-(1+u')/(r-1)}$ for some $u'>0$ and $\log\tau_n \gtrsim -\log p_n$. But in application, this value is very small, e.g., such $\tau_n$ in Experiment 3 would be around $10^{-13}$. Too small $\tau_n$ will cause problems in Gibbs sampling\cite{van2014horseshoe}. Currently, inference for the global hyperparmeter is still an open problem\cite{piironen17onthehyperprior}. Here, we set $\tau_n=1/(p_n\sqrt{n\log n})$, which achieves posterior consistency, although theoretical posterior contraction rate is not available\cite{bai2018high}. We use the Gibbs sampler in $R$ package MBSP, where the major computational complexity is linear in $p_n$\cite{bhattacharya2016fast}. Each experiment is repeated 100 times. In all experiments, Gibbs sampler is run for 15000 iterations, the first 5000 iterations are burn-in. 

Posterior mean  
 $\hat{\bSigma}$ is taken to be the point estimators of 
  $\bSigma$. $\Twonorm{\hat{\bSigma}-\bSigma_0}$ and $\Fnorm{\hat{\bSigma}-\bSigma_0}$ are used to measure the difference between posterior estimates and the truth in two different norms. Figure \eqref{plot:ratio} illustrates how $\Twonorm{\hat{\bSigma}-\bSigma_0}$ and $\Fnorm{\hat{\bSigma}-\bSigma_0}$ decrease as $n$ and $p$ increase. Although this trend is a finite sample behavior, it matches our posterior consistency result established in previous section.

\begin{figure}[h]
	\centering
	\includegraphics[width = 5in, keepaspectratio]{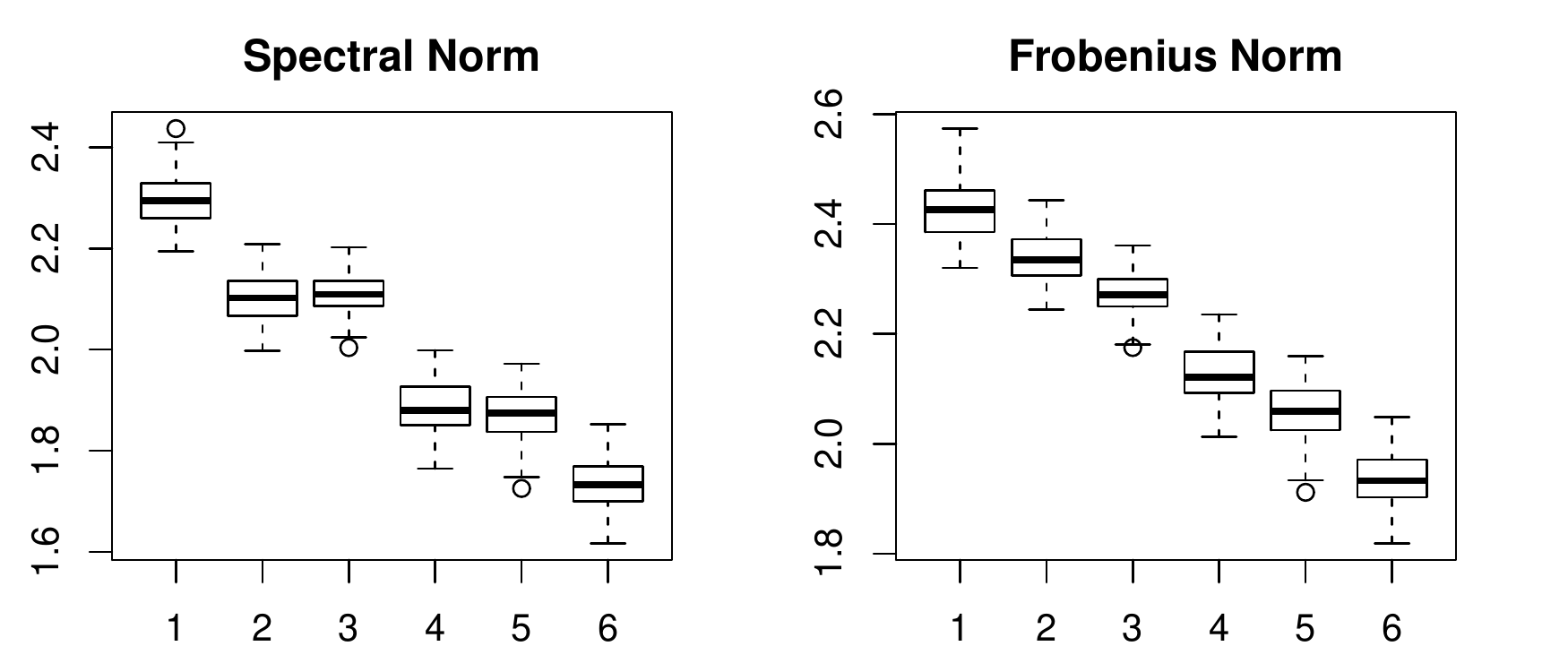}
		\caption{Box-plots of difference between estimated and true covariance matrix. The x-axis indicates experiment number and the y-axis indicates $\Twonorm{\hat{\bSigma}-\bSigma_0}$ (left) and 		$\Fnorm{\hat{\bSigma}-\bSigma_0}$ (right) respectively.}
		\label{plot:ratio}
\end{figure}

\subsection{Data Analysis}

We estimate the correlation between multiple responses on a yeast cell cycle data set. This data set was first analyzed by \citet{chun2010sparse} and is available in the spls package in R. 

In molecular biology, transcription factors (TFs), also known as sequence-specific DNA-binding factors, are proteins that controls the rate of transcription of genetic information from DNA to mRNA, by binding to a specific DNA sequence. To understand the regulatory mechanism of TFs, it is important to reveal the relationship between TFs and their target genes.

In the original yeast cell cycle data set, the response Y consists of 542 cell-cycle-regulated genes from an $\alpha$ factor arrest method and mRNA levels measured every 7 minutes at 18 time points, i.e. $n=542$, $q=18$. The $542\times106$ design matrix X consists of 106 TFs' binding information, representing the strength of interaction between TFs and the target genes. This data set has been analyzed in \citet{chen2012sparse, goh2017bayesian, bai2018high} with various variable selection and estimation methods for regression matrix $\bB$.

Here, we focus on the estimation of covariance $\bSigma$ between responses. Because no sparsity is assumed in $\bSigma$ or $\bSigma^{-1}$, $q=18$ is too large to get accurate estimation. We only use the first four measurements, i.e. $q=4$. The point estimator obtained for covariance matrix between responses is
\begin{equation*}
\hat{\bSigma} = 
\begin{bmatrix}
0.39 & 0.17 & 0.07 & -0.03\\
0.17 & 0.30 & 0.13 & 0.04\\
0.07 & 0.13 & 0.28 & 0.17\\
-0.03 & 0.04 & 0.17 & 0.25
\end{bmatrix}. 
\end{equation*}

Note that $Y_i$'s are mRNA levels measured every 7 minutes, it would be natural to observe autocorrelation as demonstrated in the above estimator.


\section{Conclusion and Future Work}
This paper has several contributions. First, we propose a set of general conditions for continuous prior $\pi(\bB|\bSigma)$ in sparse multivariate Bayesian estimation that can achieve nearly-optimal posterior contraction rate. While previous Bayesian multivariate models usually assume $\bSigma$ to be fixed and known, our work highlights the proof of posterior contraction of both coefficient matrix $\bB$ and covariance matrix $\bSigma$. Moreover, we allow $p_n$ to grow nearly at an exponential rate with $n$ and response dimension $q_n$ to go to infinity. To the best of our knowledge, our work is the first paper showing the nearly-optimal contraction rate of continuous shrinkage priors under this setting. The tools we developed in proof can also be utilized in other multivariate Bayesian models. For application, we show that a large family of heavy-tailed priors, including Student'’s t prior, horseshoe and horseshoe+ prior, the generalized double Pareto prior, etc, satisfy the condition with good posterior contraction results.
 
Although we have established an informative $\ell_2$ reconstruction rate, there are still many important issues unexplored. One of them is the sparsity of $\bSigma$ or its inverse. In our paper, where no structure of $\bSigma$ is assumed, although dimension of response $q_n$ is allowed to grow, it has to be much smaller than sample size $n$ in order to keep $\bSigma$ consistently estimable\cite{ning2018bayesian}. Recently, to encourage sparsity of precision matrix, \citet{li2019joint} proposed a model putting horseshoe prior on regression coefficient and graphical horseshoe prior on precision matrix.  

Another interesting problem is whether to adopt the joint scale-invariant prior framework. We use a scale-invariant prior in the paper, but this may result in underestimating the model error\cite{moran1801variance}. \citet{moran1801variance} recommend independent priors for regression coefficient and error variance \textit{apriori} for preventing distortion of the global-local shrinkage mechanism and obtaining better estimates of the error variance.  

\section*{Acknowledgments}
The authors are grateful to Ray Bai and Qian Qin for helpful comments and suggestions.

\bibliographystyle{elsarticle-harv}
\bibliography{multi_posterior_ref}

\section*{Appendix}
	
	\begin{proof} (Theorem \ref{thm:posterior contraction})
		Define auxiliary sets $A_n$, $B_n$ and $C_n$ are constructed as follow.  
		\begin{align*}
		A_n = &\{\mbox{at least } \tilde{p} \mbox{ entries } \norm{\bB_j\bSigma^{-1/2}} \mbox{ is larger than } a_n\} \\
		\cup  &\{\norm{\bSigma-\bSigma_0} \geq M(\norm\bSigma \lor \norm{\bSigma_0})\epsilon_n \} \\
		\cup  &\{\Fnorm{(\bB-\bBzero)\bSigma_0^{-1/2}} \geq M \epsilon_n \},
		\end{align*}
		
		$B_n = \{\mbox{at least } \tilde{p} \mbox{ entries }  \norm{\bB_j\bSigma^{-1/2}} \mbox{ is larger than } a_n\}$,  
		and $C_n=A_n \setminus B_n$. Let $\theta = (\bB,\bSigma)$ and $\theta_0 = (\bBzero,\bSigma_0)$. By Lemma A.4 in \citet{song2017nearly}, it suffices to show the following three parts:
		\begin{equation} \label{prior}
		\pi(B_n) \leq e^{-k_1 n\epsilon_n^2} 
		\end{equation}
		
		There exists a test function $\phi_n$ s.t.
		\begin{align} 
		\E_{\theta_0} \phi_n &\leq e^{-k_2 n\epsilon_n^2}, \label{test1}\\
		\sup_{\theta \in C_n} \E_{\theta} (1-\phi_n) &\leq e^{-k_3 n\epsilon_n^2} \label{test2}
		\end{align}

		And for sufficiently large $n$,
		\begin{equation} \label{marginal}
		P_{\theta_0}\big( \dfrac{m(\Data)}{f_{\theta_0}(\Data)} \geq e^{-k_4 n\epsilon_n^2}\big) \geq 1-e^{-k_5 n\epsilon_n^2}
		\end{equation}
		for some constant $0<k_4<\min(k_1,k_3)$,
		where $m(\Data)=\int_{\Theta} \pi(\theta)f_{\theta}(\Data) d\theta$ is the marginal of $\Data$.
		
		So the proof is composed of three parts: 
		(\RomanNumeralCaps 1)construction of test $\phi_n$ satisfying \eqref{test1} and \eqref{test2}, 
		(\RomanNumeralCaps 2) showing that event $B_n$ has very probability under the specified prior,
		(\RomanNumeralCaps 3) showing that the marginal probability of data is highly likely to be bounded away from 0 if data is generated with true parameters.\\
		
		\textbf{Part \RomanNumeralCaps 1}: Firstly, we show \eqref{test1} and \eqref{test2} by constructing testing function $\phi_n$ in the following way. \\
		For given $S\subseteq \{1,\cdots, p\}$, consider the following testing functions $\phi_{n,S}^{(1)}$ and $\phi_{n,S}^{(2)}$:
		\begin{align*}
		\phi_{n,S}^{(1)} & = 1\{
		\norm{\frac{1}{n-\det{S}}\bY^T(I_n-H_S)\bY - \bSigma_0}\geq M \norm\bSigma_0\epsilon_n/2 \} \\
		\phi_{n,S}^{(2)} & = 1\{\Fnorm{((\bX_S^T \bX_S)^{-1} \bX_S^T\bY - \bBzero_S)\sqrtbSigmaOInv} \geq M\epsilon_n/2\}
		\end{align*}
		where $\bX_S$ is the submatrix of $\bX$ composed of columns indexed by $S$, $\bB_S$ and $\bBzero_S$ are the submatrices of $\bB$ and $\bBzero$ composed of rows indexed by  $S$ respectively, $H_S = \bX_S(\bX_S^T\bX_S)^{-1} \bX_S^T$.
		
		We have the following two inequalities for $\E_{(\bBzero,\bSigma_0)}\phi_{n,S}^{(1)}$ and $\E_{(\bBzero,\bSigma_0)}\phi_{n,S}^{(2)}$.
		\begin{equation*}
		\begin{split}
		\E_{(\bBzero,\bSigma_0)}\phi_{n,S}^{(1)} & = 
		P_{(\bBzero,\bSigma_0)}
		(\norm{\frac{1}{n-\det{S}}\bY^T(I_n-H_S)\bY - \bSigma_0}\geq M \norm\bSigma_0\epsilon_n/2) \\
		& \leq e^{-c M^2 n\epsilon_n^2/(4K^2)} \quad \mbox{by Lemma \eqref{Lemma1}}
		\end{split}
		\end{equation*}
		
		\begin{equation*}
		\begin{split}
		\E_{(\bBzero,\bSigma_0)}\phi_{n,S}^{(2)} & = P(\Fnorm{(\bX_S^T \bX_S)^{-1} \bX_S^T \bepsilon} \geq M\epsilon_n/2)\\
		&\leq P(\lambda_{\max}(\bX_S^T \bX_S)^{-1} tr(\bepsilon^T H_S \bepsilon) \geq M^2 \epsilon_n^2/4)\\
		&\leq P(\chi^2_{q_n\det{S}} \geq M^2\lambda_0 n\epsilon_n^2/4) \\
		&\leq e^{-\lambda_0 M^2 n\epsilon_n^2/16}
		\end{split}
		\end{equation*}
		The last inequality holds because $q_n\lesssim \log p_n$, $\det{S} \simeq s_0$ and $P(\chi^2_p \geq x) \leq e^{-x/4}$ if $x\geq 8p$\cite{armagan2013posterior}.
		
		Let $\phi_n = \max\{\phi_n^{(1)}, \phi_n^{(2)}\}$, where 
		\[\phi_n^{(i)} = \max_{\{S\supset S_0, |S|\leq\tilde{p}+s_0\}} \phi_{n,S}^{(i)}, \quad i=1,2. \]
		
		\begin{equation*}
		\begin{split}
		\E_{(\bBzero,\bSigma_0)} \phi_n 
		& \leq \E_{(\bBzero,\bSigma_0)} \sum_{\{S\supset S_0, |S|\leq\tilde{p}+s_0\}}(\phi_{n,S}^{(1)} + \phi_{n,S}^{(2)})\\
		& \leq \sum_{i=0}^{\tilde{p}}  {p_n-s_0 \choose i} 2 e^{-k_{23} Mn\epsilon_n^2} 
		\quad \mbox{ where }k_{23}=\min(\frac{cM}{4K^2},\frac{\lambda_0M}{16})\\
		& \leq 2 (\tilde{p}+1)(p_n-s_0)^{\tilde{p}} e^{-k_{23} Mn\epsilon_n^2}
		\end{split}
		\end{equation*}
		
		Taking logarithm on both sides, 
		\begin{equation*}
		\begin{split}
		\log(\E_{(\bBzero,\bSigma_0)} \phi_n) 
		&\leq \tilde{p} \log(p_n-s_0) + \log(2(\tilde{p}+1)) - k_{23}Mn\epsilon_n^2 \\
		&\leq (\tilde{p}+1) \log p_n - k_{23}Mn\epsilon_n^2 \\
		&\leq - k_{23}Mn\epsilon_n^2 /2 \quad \mbox{ where } 
		\tilde{p} = \lfloor \dfrac{k_{23}M n\epsilon_n^2 }{2\log p_n} \rfloor -1
		\end{split}
		\end{equation*}
		
		Note that sufficient large $M (> 6/k_{23})$ will ensure there is such $\tilde{p}(\geq 2)$ such that 
		$\E_{(\bBzero,\bSigma_0)} \phi_n \leq e^{-k_2 n\epsilon_n^2}$.\\
		
		Now we want to show $\sup_{(\bB,\bSigma) \in C_n} \E_{(\bB,\bSigma)} (1-\phi_n) \leq e^{-k_3 n\epsilon_n^2}$. Consider the following two sets $C_{n_1}$ and $C_{n,2}$:
		\begin{equation*}
		\begin{split}
		C_{n,1} = &\{\norm{\bSigma-\bSigma_0} \geq M(\norm\bSigma \lor \norm\bSigma_0)\epsilon_n \} \cap
		\{\mbox{at most } \tilde{p} \mbox{ entries } \norm{\bB_j\bSigma^{-1/2}} \mbox{ is larger than } a_n\},\\
		C_{n,2} = &\{\Fnorm{(\bB-\bBzero)\bSigma_0^{-1/2}} \geq M \epsilon_n, \norm{\bSigma-\bSigma_0} \leq M(\norm\bSigma \lor \norm\bSigma_0)\epsilon_n \} \cap\\ 
		&\{\mbox{at most } \tilde{p} \mbox{ entries } \norm{\bB_j\bSigma^{-1/2}} \mbox{ is larger than } a_n\}.
		\end{split}
		\end{equation*}
		
		It's easy to verify that $C_n \subset C_{n,1}\cup C_{n,2}$, so we have
		\begin{equation*}
		\begin{split}
		\sup_{(\bB,\bSigma) \in C_n} \E_{(\bB,\bSigma)} (1-\phi_n) &= 
		\sup_{(\bB,\bSigma) \in C_n} \E_{(\bB,\bSigma)} \min(1-\phi_n^{(1)}, 1-\phi_n^{(2)}) \\
		&\leq \max \{
		\sup_{(\bB,\bSigma) \in C_{n,1}} \E_{(\bB,\bSigma)} (1-\phi_n^{(1)}),
		\sup_{(\bB,\bSigma) \in C_{n,2}} \E_{(\bB,\bSigma)} (1-\phi_n^{(2)})
		\} 
		\end{split}
		\end{equation*}
		
		By definition of $\phi_n^{(1)}$, for $\forall S\supset S_0, \det{S}\leq \tilde{p}+s_0$, we have
		\[
		\sup_{
			(\bB,\bSigma) \in C_{n,1}} \E_{(\bB,\bSigma)} (1-\phi_n^{(1)}) \leq 
		\sup_{(\bB,\bSigma) \in C_{n,1}} \E_{(\bB,\bSigma)} (1-\phi_{n,S}^{(1)}).
		\]
		
		Taking $S = \{j: \Twonorm{\bB_j\sqrtbSigmaInv} \geq a_n \} \cup S_0$, note that when $(\bB,\bSigma) \in C_{n,1}$, $\norm{\bSigma-\bSigma_0} - M\norm\bSigma_0\epsilon_n/2\geq M\norm\bSigma\epsilon_n/2$, then
		\begin{equation*}
		\begin{split}
		&\sup_{(\bB,\bSigma) \in C_{n,1}} \E_{(\bB,\bSigma)} (1-\phi_{n,S}^{(1)}) \\
		=& \sup_{(\bB,\bSigma) \in C_{n,1}} P(\norm{\frac{1}{n-\det{S}} \bY^T(I_n-H_S)\bY-\bSigma_0}
		\leq M\norm\bSigma_0\epsilon_n/2 ) \\
		\leq &\sup_{(\bB,\bSigma) \in C_{n,1}} P(\norm{\frac{1}{n-\det{S}} \bY^T(I_n-H_S)\bY-\bSigma}
		\geq \norm{\bSigma-\bSigma_0} - M\norm\bSigma_0\epsilon_n/2 ) \\
		\leq & \sup_{(\bB,\bSigma) \in C_{n,1}} P(\norm{\frac{1}{n-\det{S}} \bY^T(I_n-H_S)\bY - \bSigma} \geq M\norm\bSigma\epsilon_n/2 ) \\
		\leq& e^{-c M^2 n\epsilon_n^2/(16K^2)}
		\end{split}
		\end{equation*}
		The last inequality follows by Lemma \eqref{Lemma1} because $\bY= \bX\bB + \bepsilon\sqrtbSigma$ and for $(\bB,\bSigma) \in C_{n,1}$,
		\begin{align*}
		\Fnorm{(I_n-H_S) X\bB\sqrtbSigmaInv}^2 &= \Fnorm{X_{S^c}\bB_{S^c}\sqrtbSigmaInv}^2 \\
		&\leq \Fnorm{X_{S^c}}^2 \Fnorm{\bB_{S^c}\sqrtbSigmaInv}^2\\
		&\leq (n p_n) (p_n a_n^2) \\
		&\lesssim n\epsilon_n^2 .
		\end{align*}
		Now we consider $\sup_{(\bB,\bSigma) \in C_{n,2}} \E_{(\bB,\bSigma)} (1-\phi_{n,S}^{(2)})$. For $(\bB,\bSigma) \in C_{n,2}$, $\norm{\bSigma}-\norm{\bSigma_0} \leq M\norm\bSigma\epsilon_n$ or $\norm{\bSigma}-\norm{\bSigma_0} \leq M\norm{\bSigma_0}\epsilon_n$, so $\norm\bSigma \leq 2\norm{\bSigma_0}$, which gives $\norm{\bSigma\InvbSigma_0} \leq 2b_2/b_1$ by assumption $A_3(3)$. So we have 
		\[
		\Fnorm{(\bB_S-\bBzero_S)\sqrtbSigmaOInv} \geq \Fnorm{(\bB-\bBzero)\bSigma_0^{-1/2}} 
		- \Fnorm{\bB_{S^c}\sqrtbSigmaOInv} 
		\geq 7M\epsilon_n/8
		\]
		since $\Fnorm{\bB_{S^c}\sqrtbSigmaOInv} \leq \Fnorm{\bB_{S^c}\sqrtbSigmaInv}\norm{\bSigma\InvbSigma_0}^{1/2}$, $\Fnorm{\bB_{S^c}\sqrtbSigmaInv}\leq a_n p_n \prec \epsilon_n$ and $\norm{\bSigma\InvbSigma_0}^{1/2}$ is bounded.
		In addition, we have
		\begin{equation*}
		\begin{split}
		\Fnorm{(\bX_S^T \bX_S)^{-1} \bX_S^T X_{S^c}\bB_{S^c}\sqrtbSigmaOInv} &\leq
		\sqrt{\lambda_{\max}((\bX_S^T \bX_S)^{-1})} \Fnorm{X_{S^c}\bB_{S^c}\sqrtbSigmaOInv}\\
		& \leq\sqrt{1/(n\lambda_0)}	\norm{\bSigma\InvbSigma_0}^{1/2}
		\Fnorm{X_{S^c}\bB_{S^c}\sqrtbSigmaInv}\\
		& \leq 
		M\epsilon_n/8 \mbox{ for } M\geq 8\sqrt{b_2/(b_1\lambda_0)} 
		\end{split}
		\end{equation*}
		
		Therefore, 
		\begin{equation*}
		\begin{split}
		&\sup_{(\bB,\bSigma) \in C_{n,2}} \E_{(\bB,\bSigma)} (1-\phi_{n,S}^{(2)}) \\
		= & \sup_{(\bB,\bSigma) \in C_{n,2}}
		P(\Fnorm{((\bX_S^T \bX_S)^{-1} \bX_S^T \bY - \bBzero_S)\sqrtbSigmaOInv} \leq M\epsilon_n/2) \\
		=& 
		\begin{multlined}
		\sup_{(\bB,\bSigma) \in C_{n,2}}
		P(\Fnorm{ (\bB_S-\bBzero_S)\sqrtbSigmaOInv + 
			(\bX_S^T \bX_S)^{-1} \bX_S^T X_{S^c}\bB_{S^c}\sqrtbSigmaOInv  \\
			+ (\bX_S^T \bX_S)^{-1} \bX_S^T E \sqrtbSigma\sqrtbSigmaOInv } 
		\leq M\epsilon_n/2)
		\end{multlined}\\
		\leq& 
		\begin{multlined}
		\sup_{(\bB,\bSigma) \in C_{n,2}}
		P(\Fnorm{(\bB_S-\bBzero_S)\sqrtbSigmaOInv} -
		\Fnorm{(\bX_S^T \bX_S)^{-1} \bX_S^T X_{S^c}\bB_{S^c}\sqrtbSigmaOInv} \\
		- \Fnorm{(\bX_S^T \bX_S)^{-1} \bX_S^T E \sqrtbSigma\sqrtbSigmaOInv }
		\leq M\epsilon_n/2)
		\end{multlined} \\
		\leq&  \sup_{(\bB,\bSigma) \in C_{n,2}}
		P(\Fnorm{(\bX_S^T \bX_S)^{-1} \bX_S^T E \sqrtbSigma\sqrtbSigmaOInv }
		\geq M\epsilon_n/4) \\
		\leq & P(\chi^2_{q_n\det{S}} \geq \lambda_0 b_1 M^2 n\epsilon_n^2/(32b_2)) \\
		\leq& e^{-\lambda_0 b_1 M^2 n\epsilon_n^2/(128 b_2)}
		\end{split}
		\end{equation*}
		
		The second last inequality holds since 
		\begin{equation*}
		\begin{split}
		\Fnorm{(\bX_S^T \bX_S)^{-1} \bX_S^T E \sqrtbSigma\sqrtbSigmaOInv }^2
		&\leq \Fnorm{(\bX_S^T \bX_S)^{-1} \bX_S^T E}^2 \norm{\bSigma\InvbSigma_0} \\
		&\leq 2(b_2/b_1) \lambda_{\max} ((\bX_S^T \bX_S)^{-1})
		\Fnorm{H_S E}^2 \\
		&\leq \frac{2b_2}{b_1 \lambda_0 n} \chi^2_{q_n\det{S}}
		\end{split}
		\end{equation*}
		
		Let $k_3 = \min(\frac{cM^2}{16K^2}, \frac{\lambda_0 b_1 M^2}{128 b_2})$, \eqref{test2} is proved.\\
		
		\textbf{Part \RomanNumeralCaps 2}: Now we will show $\pi(B_n) \leq e^{-k_1 n\epsilon_n^2}$. \\
		Define $N = \det{\{j:\Twonorm{\bB_j\sqrtbSigmaInv} \geq a_n\}}$, following the proof in Part II of Theorem A.1 in \citet{song2017nearly}, we have $\pi(B_n)\leq e^{-t_n}/(2\sqrt{\pi t_n})$, where
		\begin{equation*}
		\begin{split}
		t_n &= (\tilde{p}-1)\log(\tilde{p}-1) + (\tilde{p}-1)\log\dfrac{1}{p_n v_n} + 
		(p_n - \tilde{p} + 1)\log\dfrac{p_n-\tilde{p} + 1}{p_n - p_n v_n}\\
		&\geq \frac{u}{2} \tilde{p}\log p_n - (p_n - \tilde{p} + 1)\log(1 + \dfrac{\tilde{p} - 1 -p_n v_n}{p_n - \tilde{p} + 1}) \quad
		(\mbox{recall that } p_nv_n \leq p_n^{-u})
		\\
		&\geq \frac{u}{2} \tilde{p}\log p_n - (\tilde{p} - 1 -p_n v_n)\\
		&\geq \frac{u}{4} \tilde{p}\log p_n\\
		&\geq uk_{23}M n\epsilon_n^2/16
		\end{split}
		\end{equation*}
		
		Thus $\pi(B_n) \leq e^{-k_1 n\epsilon_n^2}$ is proved with $k_1 = uk_{23}M/16$.\\

		\textbf{Part \RomanNumeralCaps 3}: At last, we will show \eqref{marginal}. \\
		By Part III in \citet{song2017nearly}, it suffices to show that 
		\begin{equation} \label{marginalsufficient}
		\begin{split}
		P_{\theta_0}( &
		\pi(\Fnorm{(\bY-X\bB)\sqrtbSigmaInv}^2 - \Fnorm{(\bY-X\bBzero)\sqrtbSigmaOInv}^2 + n\log(\dfrac{\det\bSigma}{\det{\bSigma_0}}) \leq k_4 n\epsilon_n^2/2) \\
		&\geq  e^{-k_4 n\epsilon_n^2/2}) \geq 1-e^{-k_5 n\epsilon_n^2}
		\end{split}
		\end{equation}
		
		Proof of \eqref{marginalsufficient} has three steps. 
		The first step is to show event $\Omega \coloneqq \{\FnormSq{\bepsilon} \leq n q_n (1+c_1) \mbox{ and } \TwoInfnorm{\bX^T \bepsilon} \leq c_2 n\epsilon_n \}$ has large probability closing to 1. In second step, we show that on event $\Omega$, and when the data $\bY$ is generated under true parameter $(\bBzero,\bSigma_0)$,
		$ \{
		\FnormSq{(\bY-X\bB)\sqrtbSigmaInv} - \FnormSq{(\bY-X\bBzero)\sqrtbSigmaOInv} + n\log(\det\bSigma / \det{\bSigma_0}) \leq k_4 n\epsilon_n^2/2)
		\}$
		is a super-set of 
		$\{\cap_{i=1}^{d}\{\tilde{\bSigma}: 1\leq \lambda_i(\tilde{\bSigma})\leq 1+\epsilon_n^2/q_n\} \mbox{ and }
		\TwoOnenorm{(\bB - \bBzero)\sqrtbSigmaInv} \leq \eta\epsilon_n \}$,
		where $\tilde{\bSigma} \coloneqq \bSigma_0^{-1/2}\bSigma\bSigma_0^{-1/2}$, and $\eta$ is a constant satisfies $\eta\epsilon_n/p_n \geq a_n$. 
		The last step is to get the lower bound of $\pi(\{\cap_{i=1}^{d}\{\tilde{\bSigma}: 1\leq \lambda_i(\tilde{\bSigma})\leq 1+\epsilon_n^2/q_n\} \mbox{ and }
		\TwoOnenorm{(\bB - \bBzero)\sqrtbSigmaInv} \leq \eta\epsilon_n \})$.\\
		
		\textbf{Step 1}: $P(\{\FnormSq{\bepsilon} \leq n q_n (1+c_1) \mbox{ and } \TwoInfnorm{\bX^T \bepsilon} \leq c_2 n\epsilon_n \}) \geq 1-e^{-n \epsilon_n^2}$ \\
		It's easy to see that $P(\FnormSq{\bepsilon} > n q_n (1+c_1)) \leq e^{-c_1^2n q_n/4} \leq e^{-c_1^2n \epsilon_n^2/4} $ because $\FnormSq{\bepsilon} \sim \chi^2_{nq_n}$. And $(\bX^T \bepsilon)_{ij}^2/n \leq (\bX^T \bepsilon)_{ij}^2/(\bX^T \bX)_{ii}$ by the fact that $\bX$ is uniformly bounded by 1. And note that for fixed $i$, $(\bX^T \bepsilon)_{ij}/\sqrt{(\bX^T\bX)_{ii}} \sim \Nor(0,1) \; independently, \; j=1,\cdots,q_n$. 
		Thus we have 
		\begin{equation*}
		\begin{split}
		P(\TwoInfnorm{\bX^T \bepsilon} > c_2 n\epsilon_n) 
		&= P(\max_{1\leq i \leq n} \frac{1}{n}\sum_{j=1}^{q_n} (\bX^T \bepsilon)_{ij}^2 >  c_2^2 n\epsilon_n^2) \\
		&\leq P(\max_{1\leq i \leq n} Z_i >  c_2^2 n\epsilon_n^2)  
		\quad \mbox{ where } Z_i\sim \chi^2_{q_n}\\
		&\leq p_n P(\chi^2_{q_n} > c_2^2 n\epsilon_n^2) \\
		&\leq p_n \exp\{-c_2^2 n\epsilon_n^2/4\} \\
		&\leq \exp\{-(c_2^2/4-1/s_0) n\epsilon_n^2\} \\
		&\leq e^{-3 n\epsilon_n^2}
		\end{split}
		\end{equation*}
		
		$P(\Omega) \geq 1- P(\FnormSq{\bepsilon} > nq_n(1+c_1)) - P(\TwoInfnorm{X^T E} > c_2 n\epsilon_n) \geq 1 - e^{-n\epsilon_n^2}$. \\
		
		\textbf{Step 2}: On event $\Omega$ and when the data $\bY$ is generated under true parameter $(\bBzero,\bSigma_0)$, if 
		$(\bB,\bSigma) \in \{\cap_{i=1}^{d}\{\tilde{\bSigma}: 1\leq \lambda_i(\tilde{\bSigma})\leq 1+\epsilon_n^2/q_n\} \mbox{ and }
		\TwoOnenorm{(\bB - \bBzero)\sqrtbSigmaInv} \leq \eta\epsilon_n \}$,
		then the following four inequalities hold.
		\begin{align}
		n\log(\det\bSigma / \det{\bSigma_0}) &\leq n\epsilon_n^2 \label{step2.1}\\
		\det{\FnormSq{\bepsilon\sqrtbSigmaO\sqrtbSigmaInv} - \FnormSq{\bepsilon}} &\leq (1+c_1)n\epsilon_n^2 \label{step2.2} \\
		\FnormSq{\bX(\bB - \bBzero)\sqrtbSigmaInv} &\leq \eta^2n\epsilon_n^2 \label{step2.3} \\
		\det{tr(\bX(\bB-\bBzero)\InvbSigma \sqrtbSigmaO E)} &\leq c_2\eta n\epsilon_n^2 \label{step2.4}
		\end{align}

		\eqref{step2.1} holds because 
		\[
		n\log(\det\bSigma / \det{\bSigma_0}) = n\sum_{i=1}^{q_n}\log\lambda_i(\tilde{\bSigma})
		\leq n\sum_{i=1}^{q_n}\log(1+\epsilon_n^2/q_n) \leq n\epsilon_n^2.
		\]
		
		For \eqref{step2.2}, it's easy to see
		\[
		\FnormSq{\bepsilon}\lambda_1(\tilde{\bSigma}^{-1} - I_{q_n}) \leq tr(\bepsilon^T \bepsilon (\tilde{\bSigma}^{-1}  - I_{q_n})) \leq 
		\FnormSq{\bepsilon}\lambda_{q_n}(\tilde{\bSigma}^{-1} - I_{q_n}).
		\]
		And
		$ 1-\epsilon_n^2 \leq \lambda_1(\tilde{\bSigma}^{-1}) \leq \lambda_{q_n}(\tilde{\bSigma}^{-1}) \leq 1 $ because
		$ 1\leq \lambda_i(\tilde{\bSigma})\leq 1+\epsilon_n^2/q_n \Rightarrow 
		1-\epsilon_n^2/q_n\leq \lambda_{q_n-i+1}(\tilde{\bSigma})\leq 1 $. So we have
		\[
		\det{\FnormSq{\bepsilon\sqrtbSigmaO\sqrtbSigmaInv} - \FnormSq{\bepsilon}} = 
		\det{tr(\bepsilon^T \bepsilon (\tilde{\bSigma}^{-1} - I_{q_n}))} \leq 
		\FnormSq{\bepsilon} \epsilon_n^2/q_n \leq (1+c_1) n\epsilon_n^2.
		\]
		For \eqref{step2.3}, let $\bX_i$ denote the $ith$ row of $\bX$, we have 
		\begin{equation*}
		\begin{split}
		\Twonorm{\bX_i (\bB - \bBzero)\sqrtbSigmaInv} &=
		\Twonorm{\sum_{j=1}^{p_n} x_{ij} (\bB - \bBzero)_j \sqrtbSigmaInv} \\
		&\leq \sum_{j=1}^{p_n} \det{x_{ij}} \Twonorm{(\bB - \bBzero)_j \sqrtbSigmaInv} \\
		&\leq \TwoOnenorm{(\bB - \bBzero)\sqrtbSigmaInv} 
		\end{split}
		\end{equation*}
		So $\FnormSq{\bX(\bB - \bBzero)\sqrtbSigmaInv} = \sum_{i=1}^n\Twonorm{\bX_i (\bB - \bBzero)\sqrtbSigmaInv}^2 \leq \eta^2n\epsilon_n^2 $.\\
		
		\eqref{step2.4} is immediately by Lemma \eqref{Lemma2}
		\begin{equation*}
		\begin{split}
		\det{tr(\bX(\bB-\bBzero)\InvbSigma \sqrtbSigmaO \bepsilon)} &= \det{tr(\bepsilon^T \bX(\bB-\bBzero)\InvbSigma \sqrtbSigmaO)} \\
		&\leq \TwoInfnorm{\bX^T \bepsilon} \TwoOnenorm{(\bB-\bBzero)\InvbSigma \sqrtbSigmaO}\\
		&\leq \TwoInfnorm{\bX^T \bepsilon} \lambda_{q_n}(\tilde{\bSigma}^{-1})^{1/2}
		\TwoOnenorm{(\bB-\bBzero)\sqrtbSigmaInv}\\
		&\leq c_2\eta n\epsilon_n^2
		\end{split}
		\end{equation*}
		
		Combining \eqref{step2.1}-\eqref{step2.4}, we can get
		\begin{equation*}
		\begin{split}
		& \FnormSq{(\bY-\bX\bB)\sqrtbSigmaInv} - \FnormSq{(\bY-\bX\bBzero)\sqrtbSigmaOInv} + n\log(\det\bSigma / \det{\bSigma_0})\\
		=& \FnormSq{\bX(\bB - \bBzero)\sqrtbSigmaInv} + 2tr(X(\bB-\bBzero)\InvbSigma \sqrtbSigmaO E) + \FnormSq{\bepsilon\sqrtbSigmaO\sqrtbSigmaInv} - \FnormSq{\bepsilon} + n\log(\det\bSigma / \det{\bSigma_0}) \\
		\leq& k_4 n\epsilon_n^2 \quad \mbox{when }k_4\geq 2+c_1+\eta^2+2c_2\eta
		\end{split}
		\end{equation*}
		
		\textbf{Step 3}: We want to show 
		\[
		\pi(\cap_{i=1}^{q_n}\{\tilde{\bSigma}: 1\leq \lambda_i(\tilde{\bSigma})\leq 1+\epsilon_n^2/q_n\} \mbox{ and }
		\TwoOnenorm{(\bB - \bBzero)\sqrtbSigmaInv} \leq \eta\epsilon_n) 
		\geq e^{-k_4 n \epsilon_n^2/2} .
		\]
		
		By Lemma A.4 in \citet{ning2018bayesian}, 
		\begin{equation} \label{step3.1}
		-\log(\pi(\cap_{i=1}^{q_n}\{\tilde{\bSigma}: 1\leq \lambda_i(\tilde{\bSigma})\leq 1+\epsilon_n^2/q_n\}) \lesssim q_n^2\log q_n + q_n^2\log\frac{1}{\epsilon_n^2} \lesssim q_n^2 \log n
		\lesssim n \epsilon_n^2
		\end{equation}
		
		Now, we look at 
		\[
		\pi(\TwoOnenorm{(\bB - \bBzero)\sqrtbSigmaInv} \leq \epsilon_n | \cap_{i=1}^{q_n}\{\tilde{\bSigma}: 1\leq \lambda_i(\tilde{\bSigma})\leq 1+\epsilon_n^2/q_n\}).
		\]
		It is easy to see that
		\begin{equation*}
		\begin{split}
		\{\TwoOnenorm{(\bB - \bBzero)\sqrtbSigmaInv} \leq \eta\epsilon_n \} \supset &
		\{\Twonorm{\bB_j\sqrtbSigmaInv} \leq \eta\epsilon_n/p_n \mbox{ for all } j\notin S\}  \cap\\
		& 
		\{\Twonorm{(\bB_j - \bBzero_j)\sqrtbSigmaInv} \leq \eta\epsilon_n/s_0  \mbox{ for all } j\in S_0\} 
		\end{split}
		\end{equation*}
		
		Firstly,
		\begin{equation} \label{step3.2}
		\begin{split}
		\pi(\Twonorm{\bB_j\sqrtbSigmaInv} \leq \eta\epsilon_n/p_n \mbox{ for } j\notin S_0) 
		&\geq \prod_{j\notin S_0}\pi(\Twonorm{\bB_j\sqrtbSigmaInv} \leq a_n) \\
		&\geq (1-p_n^{-(1+u)})^{p_n} \to 1
		\end{split}
		\end{equation}
		
		For a $d$-dimensional vector $\bx_0$, let $\Ball(\bx_0, r)$ denotes a ball in $\mathbb{R}^d$ with center $\bx_0$ and radius $r$. Note that conditioning on $\cap_{i=1}^{q_n}\{\tilde{\bSigma}: 1\leq \lambda_i(\tilde{\bSigma})\leq 1+\epsilon_n^2/q_n\}$, 
		\[
		\Ball(\bBzero_j\sqrtbSigmaInv, \eta\epsilon_n/s_0)\subset 
		\Ball(\boldsymbol{0}, \Twonorm{\bBzero_j\sqrtbSigmaInv} + \eta\epsilon_n/s_0) \subset
		\Ball(\boldsymbol{0}, \gamma \TwoInfnorm{\bBzero\sqrtbSigmaOInv})
		\]
		because 
		$\Twonorm{\bBzero_j\sqrtbSigmaInv} \leq \Twonorm{\bBzero_j\sqrtbSigmaOInv} \sqrt{\lambda_{q_n}(\tilde{\bSigma}^{-1})} \leq \Twonorm{\bBzero_j\sqrtbSigmaOInv}$. 
		
		Therefore, we have
		\begin{equation*}
		\begin{split}
		& \pi(\Twonorm{(\bB_j - \bBzero_j)\sqrtbSigmaInv} \leq \eta\epsilon_n/s_0  \mbox{ for all } j\in S_0 \vert
		\cap_{i=1}^{d}\{\tilde{\bSigma}: 1\leq \lambda_i(\tilde{\bSigma})\leq 1+\epsilon_n^2/q_n\})) \\
		\geq& \big(
		\mbox{Volumn}(\Ball(\bBzero_j\sqrtbSigmaInv, \eta\epsilon_n/s_0))
		\inf_{\Twonorm{\bB_j\sqrtbSigmaInv} \leq M_0}
		g_\tau(\bB_j\sqrtbSigmaInv)
		\big)^{s_0} \\
		=& \big(\dfrac{\pi^{q_n/2}}{\Gamma(q_n/2+1)}\big)^{s_0}
		\big(\dfrac{\eta\epsilon_n}{s_0}\big)^{q_n s_0}
		\big(\inf_{\Twonorm\bx \leq M_0}
		g_\tau(\bx)
		\big)^{s_0}
		\end{split}
		\end{equation*}
		
		Taking log and multiply by $-1$ on both sides, 
		\begin{equation}\label{step3.3}
		\begin{split}
		&-\log \pi(\Twonorm{(\bB_j - \bBzero_j)\sqrtbSigmaInv} \leq \epsilon_n/s_0  \mbox{ for all } j\in S_0 \vert
		\cap_{i=1}^{q_n}\{\tilde{\bSigma}: 1\leq \lambda_i(\tilde{\bSigma})\leq 1+\epsilon_n^2/q_n\})) \\
		\leq& -\frac{q_n s_0}{2}\log \eta^2\pi + s_0\log\Gamma(\frac{q_n}{2}+1) + q_n s_0\log s_0 + \frac{q_n s_0}{2} \log(1/\epsilon_n^2) - s_0\log\inf_{\Twonorm\bx \leq M_0}
		g_\tau(\bx) \\
		\leq& q_n s_0\log(q_n s_0) + \frac{q_n s_0}{2} \log(1/\epsilon_n^2) + s_0(-\log\inf_{\Twonorm\bx \leq M_0}
		g_\tau(\bx))\\
		\lesssim& q_n s_0\log n + s_0\log p_n \lesssim n\epsilon_n^2
		\end{split}
		\end{equation}
		
		From \eqref{step3.1}, \eqref{step3.2} and \eqref{step3.3}, we get 
		\[
		\pi(\cap_{i=1}^{d}\{\tilde{\bSigma}: 1\leq \lambda_i(\tilde{\bSigma})\leq 1+\epsilon_n^2/q_n\} \mbox{ and }
		\TwoOnenorm{(\bB - \bBzero)\sqrtbSigmaInv} \leq \eta\epsilon_n) 
		\geq e^{-k_4 n \epsilon_n^2/2}
		\]
		for sufficiently large $k_4$.\\
		Also note that by Lemma A.4 in \citet{song2017nearly}, $k_4$ needs to satisfy $k_4 <\min(k_1, k_3)$. Existence of such $k_4$ is ensured by setting $M$ be a sufficiently large constant so that $k_1$ and $k_3$ are large.\\
	\end{proof}
	
	
	\begin{proof} (Theorem \ref{thm:selection_consistency})
	Let $A_n$ be the auxiliary set defined in Theorem 1, then
\begin{align*}
A_n^c = &\{\mbox{at most } \tilde{p} \mbox{ entries } \norm{\bB_j\bSigma^{-1/2}} \mbox{ is larger than } a_n\} \\
\cap  &\{\norm{\bSigma-\bSigma_0} \leq M(\norm\bSigma \lor \norm{\bSigma_0})\epsilon_n \} \\
\cap  &\{\Fnorm{(\bB-\bBzero)\bSigma_0^{-1/2}} \leq M \epsilon_n \}
\end{align*}
From Theorem 1, we have 
$P_0\{\Pi(A_n^c|\bYn)> 1-\exp(-c_1n\epsilon_n^2)\}>1-\exp(-c_2n\epsilon_n^2)$. 
All the following analysis is conditioning on $\Omega_n = \Pi(A_n^c|\bYn)> 1-\exp(-c_1n\epsilon_n^2)$.

The proof has three parts. Part 1 shows that only the case $S_n\supseteq S_0$ needs to be considered.
Part 2 calculates  $\pi(S=S_0|\bYn)$ and $\pi(S=S'|\bYn)$ where $S'\supseteq S_0$. 
Part 3 shows that $\sum_{S'\supseteq S_0, \det{S'\backslash S_0}\geq1}\dfrac{\pi(S=S'|\bYn)}{\pi(S=S_0|\bYn)} \to 0$.\\

\textbf{Part I:} We first show that it suffices to consider $S_n\supseteq S_0$ because $\pi(S_n\nsupseteq S_0 | \bYn) \to 0$.
Note that
\begin{align*}
\pi(S_n\nsupseteq S_0 | \bYn) 
&= \pi(\exists \; j \text{ satisfying } \bBzero_j\neq \bB_j, \norm{\bB_j\sqrtbSigmaInv}\leq a_n | \bYn) \\
& \leq \pi(\norm{(\bB_j-\bBzero_j)\sqrtbSigmaOInv} \geq M\epsilon_n | \bYn) \to 0
\end{align*}
The inequality holds because for $j \text{ satisfying } \bBzero_j\neq \bB_j$ and $\norm{\bB_j\sqrtbSigmaInv}\leq a_n$, 
\begin{align*}
\norm{(\bB_j-\bBzero_j)\sqrtbSigmaOInv} & \geq \norm{\bBzero_j\sqrtbSigmaOInv} - \norm{\bB_j\sqrtbSigmaOInv} \\
& \geq \sqrt{1/b_2}\norm{\bBzero_j} - \sqrt{2b_2/b_1}\norm{\bB_j\sqrtbSigmaInv} \\
& \geq \sqrt{1/b_2}M_1\epsilon_n - \sqrt{2b_2/b_1}a_n \geq M_2 \epsilon_n
\end{align*}
since $\norm{\bSigma_0} \leq b_2$ and $\norm{\bSigma\bSigma_0^{-1}} \leq 2b_2/b_1$ by previous results.\\

\textbf{Part II:} Let $E_1 = \{(\bB, \bSigma): \Fnorm{\bB_1 - \bB_{01}}\leq c_1\epsilon_n, \norm{\bSigma-\bSigma_0} \leq c_2\epsilon_n\}$, and $\underline{\pi(\bB_1 |\bSigma)} = \inf_{(\bB_1, \bSigma)\in E_1} \pi(\bB_1, \bSigma)/\pi(\bSigma)$, 
$\overline{\pi(\bB_1|\bSigma)}  = \sup_{(\bB_1, \bSigma)\in E_1} \pi(\bB_1, \bSigma)/\pi(\bSigma)$.\\

Consider $\pi(S=S_0|\bYn)$, let subscripts ``1" and ``2" denote model $S_0$ and $S_0^c$ respectively.
It is easy to check that $\min_j \norm{\bB_{1j}\sqrtbSigmaInv} > a_n$ for $S_n \supseteq S_0$, so we have
\begin{align*}
\pi(S=S_0 | \bYn)  =& \pi(\TwoInfnorm{\bB_{2}\sqrtbSigmaInv} \leq a_n | \bYn)\\
\propto  & \int \det\bSigma^{-n/2}\exp\{-\dfrac{1}{2}
\sum_{i=1}^{n}(Y_i-X_i\bB)\Sigma^{-1}(Y_i-X_i\bB)^{T}\}
\pi(\bB, \bSigma) \\
& I(\TwoInfnorm{\bB_{2}\sqrtbSigmaInv} \leq a_n) d\bB d\bSigma \\
\geq & \pi(\TwoInfnorm{\bB_{2}\sqrtbSigmaInv} \leq a_n) \int_{E_1} \inf_{\TwoInfnorm{B_{2}\sqrtbSigmaInv} \leq a_n}\det\bSigma^{-n/2} \times \\
&\exp\{-\dfrac{1}{2} \sum_{i=1}^{n}(Y_i-X_i\bB)\Sigma^{-1}(Y_i-X_i\bB)^{T}\}
\pi(\bB_1, \bSigma) d\bB_1 d\bSigma
\end{align*}

\begin{align*}
&\int_{E_1} \inf_{\TwoInfnorm{B_{2}\sqrtbSigmaInv} \leq a_n}\det\bSigma^{-n/2} 
\exp\{-\dfrac{1}{2} \sum_{i=1}^{n}(Y_i-X_i\bB)\Sigma^{-1}(Y_i-X_i\bB)^{T}\}
\pi(\bB_1, \bSigma) d\bB_1 d\bSigma \\
\geq & \underline{\pi(\bB_1 |\bSigma)} \int_{\norm{\bSigma-\bSigma_0} \leq c_2\epsilon_n}  \inf_{\TwoInfnorm{B_{2}\sqrtbSigmaInv} \leq a_n}\det\bSigma^{-n/2} \pi(\bSigma) \times \\ &
\int_{\Fnorm{\bB_1 - \bB_{01}}\leq c_1\epsilon_n}
\exp\{-\dfrac{1}{2}\sum_{i=1}^{n}(Y_i-X_{i1}\bB_1-X_{i2}\bB_2)\Sigma^{-1}(Y_i-X_{i1}\bB_1-X_{i2}\bB_2)^{T}\}
 d\bB_1 d\bSigma \\
\end{align*}
Let
\begin{align*}
SSE(\bB_2,\bSigma) & =  \min_{\bB_1}\FnormSq{(\bY-\bX_1\bB_1-\bX_2\bB_2)\Sigma^{-1/2}} \\
& = \FnormSq{(\bY-\bX_1\hat{\bB_1}-\bX_2\bB_2)\Sigma^{-1/2}} \\
\end{align*}
where $\hat{\bB_1} = (\bX_1^T\bX_1)^{-1}\bX_1^T(\bY-\bX_2\bB_2)$\\

Then we have 
\begin{align*}
& \int_{\Fnorm{\bB_1 - \bB_{01}}\leq c_1\epsilon_n}
\exp\{-\dfrac{1}{2}\sum_{i=1}^{n}(Y_i-X_{i1}\bB_1-X_{i2}\bB_2)\Sigma^{-1}(Y_i-X_{i1}\bB_1-X_{i2}\bB_2)^{T}\}
d\bB_1 \\
= & \exp\{-\dfrac{1}{2} SSE(\bB_2, \bSigma)\} \times \\
& \int_{\Fnorm{\bB_1 - \bB_{01}}\leq c_1\epsilon_n}
\exp\{-\dfrac{1}{2}
tr(\bX_1^T \bX_1(\bB_1-\hat{\bB_1})\bSigma^{-1}(\bB_1-\hat{\bB_1})^T)\}
d\bB_1 \\
\geq & (2\pi)^{-(s_0 q_n/2)} \det{\bSigma}^{s_0/2}\det{\bX_1^T\bX_1}^{-q_n/2}
\exp\{-\dfrac{1}{2} SSE(\bB_2, \bSigma)\} 
\Pr(\Fnorm{\bB_1-\hat{\bB_1}} \leq c\sqrt{s_0 q_n/n})
\end{align*}
where $\bB_1 \sim MatrixNormal(\hat{\bB_1}, \bSigma, (\bX_1^T\bX_1)^{-1})$.\\

Let $T_{\bB} \sim MatrixNormal(\bzero, I_{q_n}, I_{s_0})$, then $\bB_1-\hat{\bB_1} = \Sigma^{1/2}T_{\bB}(\bX_1^T\bX_1)^{-1/2}$. Since $\norm{\Sigma}$ is bounded and $\norm{(\bX_1^T\bX_1)^{-1}} \leq 1/(n\lambda_0)$, we have
\begin{align*}
\Pr(\Fnorm{\bB_1-\hat{\bB_1}} \geq c\sqrt{s_0 q_n/n})
& = \Pr(\Fnorm{\Sigma^{1/2}T_{\bB}(\bX_1^T\bX_1)^{-1/2}} \geq c\sqrt{s_0 q_n/n}) \\
& = \Pr(\FnormSq{T_{\bB}} \geq \tilde{c}s_0 q_n)\\
& = \Pr(\chi_{s_0 q_n}^2 \geq \tilde{c}s_0 q_n) \\
& \leq \exp(-2s_0 q_n)  \text{ if } \tilde{c} \geq 8
\end{align*}
So we have $\Pr(\Fnorm{\bB_1-\hat{\bB_1}} \lesssim \sqrt{s_0 q_n/n}) \to 1$ for large $n$.\\

Let function $K_n(\bB_S):=(\bY-\bX_S\bB_S)^T(I_n-H_{S^c})(\bY-\bX_S\bB_S)$, where $H_{S^c}=\bX_{S^c}(\bX_{S^c}^T\bX_{S^c})^{-1}\bX_{S^c}^T$
\begin{align*}
&\int_{\norm{\bSigma-\bSigma_0} \leq c_2\epsilon_n} 
\inf_{\TwoInfnorm{B_{2}\sqrtbSigmaInv} \leq a_n}
\det{\bSigma}^{-(n-s_0)/2} 
\exp\{-\dfrac{1}{2} SSE(\bB_2, \bSigma)\}
\pi(\bSigma) d\bSigma \\
\geq & \inf_{\TwoInfnorm{B_{2}\sqrtbSigmaOInv} \leq a_n} \int_{\norm{\bSigma-\bSigma_0} \leq c_2\epsilon_n} 
\det{\bSigma}^{-(n-s_0)/2} \exp\{-\dfrac{1}{2}tr((K_n(\bB_2)+\Phi)\bSigma^{-1})\}
\pi(\bSigma) d\bSigma \\
= & \int_{\norm{\bSigma-\bSigma_0} \leq c_2\epsilon_n} 
\dfrac{\det{\Phi}^{\upsilon/2}}{2^{\upsilon q_n /2} \Gamma_{q_n}(\upsilon/2)}
\det{\bSigma}^{-(\upsilon+q_n+1+n-s_0)/2} \exp\{-\dfrac{1}{2}tr((K_n(\bB_2)+\Phi)\bSigma^{-1})\} d\bSigma \\
=& \dfrac{2^{(n-s_0)q_n/2}\det{\Phi}^{\upsilon/2} \Gamma_{q_n}((\upsilon+n-s_0)/2)}
{\det{K_n(\bB_2) + \Phi}^{(\upsilon+n-s_0)/2} {\Gamma_{q_n}(\upsilon/2)}}
\Pr(\norm{T_{\bYn}-\bSigma_0} \leq c_2\epsilon_n)
\end{align*}
where $T_{\bYn} \sim \InvWis(\upsilon+n-s_0, K_n(\bB_2) + \Phi)$ and $\Gamma_{q_n}(\cdot)$ is multivariate gamma function.\\

For simplicity, let $T_{\bYn} \sim \InvWis(\nu_n, \Psi_{Y_n})$, where $\nu_n=\upsilon+n-s_0 \simeq n$,  $\Psi_{\bYn}=K_n(\bB_2) + \Phi$. 
\begin{align*}
\Pr(\norm{T_{\bYn}-\bSigma_0} \geq c_2\epsilon_n |\bYn)
& \leq \E(\norm{T_{\bYn}-\bSigma_0}^2|\bYn)/(c_2^2\epsilon_n^2)\\
& \leq \E(\FnormSq{T_{\bYn}-\bSigma_0}|\bYn)/(c_2^2\epsilon_n^2)\\
& = \FnormSq{\E(T_{\bYn})- \Sigma_0}/(c_2^2\epsilon_n^2) +
    \sum_i\sum_j Var(T_{\bYn,ij})/(c_2^2\epsilon_n^2)
\end{align*}

We first show that if 
$\Pr_{(\bB_0, \Sigma_0)} (\norm{\dfrac{\Psi_{\bYn}}{\nu_n-q_n-1} - \Sigma_0} \geq \dfrac{\epsilon_n}{M\sqrt{q_n}}) \leq \exp(-\tilde{c}n\epsilon_n^2/q_n)$
holds for sufficiently large $M$, then $\FnormSq{\E(T_{\bYn})- \Sigma_0}/(c_2^2\epsilon_n^2) \to 0 \text{ in } P_{(\bB_0,\Sigma_0)}$ and 
$\sum_i\sum_j Var(T_{\bYn,ij})/(c_2^2\epsilon_n^2) \to 0 \text{ in } P_{(\bB_0,\Sigma_0)}$. 

$\FnormSq{\E(T_{\bYn})- \Sigma_0}/\epsilon_n^2 = \FnormSq{\dfrac{\Psi_{\bYn}}{\nu_n-q_n-1} - \Sigma_0}/\epsilon_n^2
\leq \dfrac{q_n}{\epsilon_n^2} \norm{\dfrac{\Psi_{\bYn}}{\nu_n-q_n-1} - \Sigma_0}  \to 0 \text{ in } P_{(\bB_0,\Sigma_0)}$

\begin{equation*}
Var(T_{\bYn}) = 
\begin{cases}
\dfrac{(\nu_n-q_n+1)\Psi_{\bYn,ij}^2 + (\nu_n-q_n-1)\Psi_{\bYn,ii}\Psi_{\bYn,jj}}
{(\nu_n-q_n)(\nu_n-q_n-1)^2(\nu_n-q_n-3)}, & i\neq j \\
\\
\dfrac{2\Psi_{\bYn,ii}^2}{(\nu_n-q_n-1)^2(\nu_n-q_n-3)}, & i = j\\
\end{cases}
\end{equation*}
Because $\Fnorm{\dfrac{\Psi_{\bYn}}{\nu_n-q_n-1} - \Sigma_0} \leq 
	\norm{\dfrac{\Psi_{\bYn}}{\nu_n-q_n-1} - \Sigma_0} \to 0 
	\text{ in } P_{(\bB_0,\Sigma_0)}$, i.e. $\dfrac{\Psi_{\bYn,ij}}{\nu_n-q_n-1} \to \Sigma_{0, ij} \text{ in } P_{(\bB_0,\Sigma_0)}$, $Var(T_{\bYn, ij} | \bYn) = \bigO_{P_{(\bB_0,\Sigma_0)}}(1/n)$. Hence  $\sum_i\sum_j Var(T_{\bYn,ij})/(c_2^2\epsilon_n^2) \leq \bigO_{P_{(\bB_0,\Sigma_0)}}(\frac{q_n^2}{n\epsilon_n^2})\to 0 \text{ in } P_{(\bB_0,\Sigma_0)}$.
	
Now, it suffices to show $Pr_{(\bB_0, \Sigma_0)} (\norm{\dfrac{\Psi_{\bYn}}{\nu_n-q_n-1} - \Sigma_0} \geq \dfrac{\epsilon_n}{M\sqrt{q_n}}) \leq \exp(-\tilde{c}n\epsilon_n^2/q_n)$ for sufficiently large $M$. 

By Lemma 1, let $\tilde{\epsilon_n} = \frac{\epsilon_n}{M\sqrt{q_n}}$, $d=q_n$, $\Sigma = \Sigma_0$, $A =\bepsilonn\sqrtbSigmaO$, $P=I_n-H_1$, $U=-X_2\bB_{2}$,
it is easy to verify that $\tilde{\epsilon_n}\to 0$, $n\tilde{\epsilon_n}^2 \to \infty$, $q_n\prec n\tilde{\epsilon_n}^2$, $\nu_n-q_n-1 \simeq n$ and since $a_n p_n \lesssim (\log n / n)^{1/2}$, we have
\[
\FnormSq{(I_n-H_1)X_2\bB_{2}\sqrtbSigmaOInv} \leq \FnormSq{X_2\bB_2\sqrtbSigmaOInv} \leq np_n^2a_n^2
\lesssim \log n \lesssim n\tilde{\epsilon_n}^2.
\]
Hence,
$\Pr(\norm{\dfrac{S_n}{\nu_n-q_n-1} - \Sigma_0} \geq \tilde{\epsilon_n}) \leq \exp(-\tilde{c}n\tilde{\epsilon_n}^2)$ by Lemma 1. And $\norm{\dfrac{\Phi}{\nu_n-q_n-1}} \leq \Fnorm{\dfrac{\Phi}{\nu_n-q_n-1}} \lesssim \dfrac{q_n}{n} \prec \tilde{\epsilon_n}$. We have $\Pr_{(\bB_0, \Sigma_0)} (\norm{\dfrac{\Psi_{\bYn}}{\nu_n-q_n-1} - \Sigma_0} \geq \dfrac{\epsilon_n}{M\sqrt{q_n}}) \leq \exp(-\tilde{c}n\epsilon_n^2/q_n)$ for sufficiently large $M$. Therefore, $\Pr(\norm{T_{\bYn}-\bSigma_0} \lesssim \epsilon_n) \to 1$ for large $n$.\\

We now calculate $\pi(S=S' | \bYn)$ where $S'\supset S_0$. 
For notation simplicity, we abuse subscripts a little bit.
Now let subscripts ``1", ``2" and ``3" denote model $S_0$, $S'\backslash S_0$ and $(S')^c$ respectively.
\begin{align*}
\pi(S=S' | \bYn) \propto 
& \int \det\bSigma^{-n/2}\exp\{-\dfrac{1}{2}
\sum_{i=1}^{n}(Y_i-X_i\bB)\Sigma^{-1}(Y_i-X_i\bB)^{T}\}
\pi(\bB, \bSigma) \\
& I(\TwoInfnorm{B_{3}\sqrtbSigmaInv} \leq a_n)
I(\min_j \norm{B_{2j}\sqrtbSigmaInv} > a_n) d\bB d\bSigma \\
\lesssim & 
\pi(\TwoInfnorm{B_{3}\sqrtbSigmaInv} \leq a_n, \;
     \min_j \norm{B_{2j}\sqrtbSigmaInv} > a_n) \times \\
 \sup_{\TwoInfnorm{B_{3}\sqrtbSigmaOInv} \lesssim a_n} & \int_{E_1} \det\bSigma^{-n/2} \exp\{-\dfrac{1}{2} \sum_{i=1}^{n}(Y_i-X_i\bB)\Sigma^{-1}(Y_i-X_i\bB)^{T}\}
\pi(\bB_1, \bSigma) d\bB_1 d\bSigma
\end{align*}

\begin{align*}
& \int_{E_1} \det\bSigma^{-n/2} \exp\{-\dfrac{1}{2} \sum_{i=1}^{n}(Y_i-X_i\bB)\Sigma^{-1}(Y_i-X_i\bB)^{T}\}
\pi(\bB_1, \bSigma) d\bB_1 d\bSigma \\
\leq & \overline{\pi(\bB_1|\bSigma)} 
\exp\{-\dfrac{1}{2} SSE(\bB_2, \bB_3, \bSigma)\} \times \\
& \int \det{\bSigma}^{-n/2} 
\exp\{-\dfrac{1}{2}
tr(\bX_1^T \bX_1(\bB_1-\tilde{\bB_1})\bSigma^{-1}(\bB_1-\tilde{\bB_1})^T)\}
d\bB_1 \\
= & \overline{\pi(\bB_1|\bSigma)}
(2\pi)^{-(s_0 q_n/2)}\det{\bX_1^T\bX_1}^{-q_n/2} 
 \int \det{\bSigma}^{-(n-s_0)/2} \exp\{-\dfrac{1}{2}SSE(\bB_2,\bB_3,\bSigma)\} \pi(\bSigma) d\bSigma \\
= & 
\overline{\pi(\bB_1|\bSigma)}
(2\pi)^{-(s_0 q_n/2)}\det{\bX_1^T\bX_1}^{-q_n/2} 
\dfrac{2^{(n-s_0)q_n/2}\det{\Phi}^{\upsilon/2} \Gamma_{q_n}((\upsilon+n-s_0)/2)}
{\det{K_n(\bB_2, \bB_3) + \Phi}^{(\upsilon+n-s_0)/2} {\Gamma_{q_n}(\upsilon/2)}} 
\end{align*}

\textbf{Part III:} Now we want to show $\sum_{S'\supseteq S_0, \det{S'\backslash S_0}\geq1}\dfrac{\pi(S=S'|\bYn)}{\pi(S=S_0|\bYn)} \to 0$.
\begin{align*}
& \dfrac{\pi(S=S' | \bYn)}{\pi(S=S_0 | \bYn)} \lesssim 
\dfrac{\overline{\pi(\bB_1|\bSigma)} }{\underline{\pi(\bB_1 |\bSigma)}}
\dfrac{\pi(\TwoInfnorm{B_{(S')^c}\sqrtbSigmaInv} \leq a_n, \; \min_j \norm{({B_{S'\backslash S_0}\sqrtbSigmaInv})_j} > a_n)}
{\pi(\TwoInfnorm{B_{S_0^c}\sqrtbSigmaInv} \leq a_n)} \times \\
& \dfrac{\sup_{\TwoInfnorm{B_{S_0^c}\sqrtbSigmaOInv} \leq a_n}
{\det{K_n(\bB_{S_0^c}) + \Phi}^{(\upsilon+n-s_0)/2}}}
{\inf_{\TwoInfnorm{B_{S'^c}\sqrtbSigmaOInv} \leq a_n}
{\det{K_n(\bB_{S'\backslash S_0}, \bB_{S'^c}) + \Phi}^{(\upsilon+n-s_0)/2}}}
\end{align*}
Firstly, \[
\dfrac{\overline{\pi(\bB_1|\bSigma)} }{\underline{\pi(\bB_1 |\bSigma)}} \leq l_n^{s_0} \text{by \eqref{cond:flatness_g}}\] 
And we also have
\begin{align*}
& \dfrac{\pi(\TwoInfnorm{B_{(S')^c}\sqrtbSigmaInv} \leq a_n, \; \min_j \norm{({B_{S'\backslash S_0}\sqrtbSigmaInv})_j} > a_n)}
{\pi(\TwoInfnorm{B_{S_0^c}\sqrtbSigmaInv} \leq a_n)} \\
= & \dfrac{\pi(\min_j \norm{({B_{S'\backslash S_0}\sqrtbSigmaInv})_j} > a_n)}
{\pi(\TwoInfnorm{B_{(S'\backslash S_0)^c}\sqrtbSigmaInv} \leq a_n)} \\
\leq & \big(\dfrac{p_n^{-(1+u)}}{1-p_n^{-(1+u)}} \big)^{\det{S'\backslash S_0}}
\end{align*}

Next, we examine the behavior of $\dfrac{\det{K_n(\bB_{S_0^c}) + \Phi}}{\det{K_n(\bB_{S'^c}) + \Phi}}$ under constraints $\TwoInfnorm{B_{S_0^c}\sqrtbSigmaOInv} \leq a_n$ and $\TwoInfnorm{B_{S'^c}\sqrtbSigmaOInv} \leq a_n$.
It is easy to verify that 
\[
\dfrac{\det{K_n(\bB_{S_0^c}) + \Phi}}{\det{K_n(\bB_{S'^c}) + \Phi}} \leq 
\dfrac{\det{K_n(\bB_{S_0^c}) + \Phi}}{\det{K_n(\bB_{S'^c})}}
\]
where 
\begin{align*}
K_n(\bB_{S_0^c}) = & (\bYn-\bX_{S_0^c}\bB_{S_0^c})^T(I_n-H_{S_0})(\bYn-\bX_{S_0^c}\bB_{S_0^c})\\
= & \Sigma_0^{1/2}\bepsilonn^T(I_n-H_{S_0})\bepsilonn \Sigma_0^{1/2} +
(\bX_{S_0^c}\bB_{S_0^c})^T(I_n-H_{S_0})(\bX_{S_0^c}\bB_{S_0^c}) \\
& - 2 \Sigma_0^{1/2}\bepsilonn^T(I_n-H_{S_0})(\bX_{S_0^c}\bB_{S_0^c})\\
K_n(\bB_{S'^c}) = & (\bYn-\bX_{S'^c}\bB_{S'^c})^T(I_n-H_{S'})(\bYn-\bX_{S'^c}\bB_{S'^c})\\
= & \Sigma_0^{1/2}\bepsilonn^T(I_n-H_{S'})\bepsilonn \Sigma_0^{1/2} \\
& +
(\bX_{S_0}\bB_{S_0}-\bX_{S'^c}\bB_{S'^c})^T(I_n-H_{S'})(\bX_{S_0}\bB_{S_0}-\bX_{S'^c}\bB_{S'^c}) \\
& - 2 \Sigma_0^{1/2}\bepsilonn^T(I_n-H_{S'})(\bX_{S_0}\bB_{S_0}-\bX_{S'^c}\bB_{S'^c})
\end{align*}

Because $
\FnormSq{\bX_{S_0^c} \bB_{S_0^c}} \leq \FnormSq{\bX_{S_0^c}} \FnormSq{\bB_{S_0^c}} \lesssim n p_n^2 a_n^2 \lesssim n\epsilon_n^2  
\prec n $ and 
\begin{align*}
\Fnorm{\Sigma_0^{1/2}\bepsilonn^T(I_n-H_{S_0})(\bX_{S_0^c}\bB_{S_0^c})} &
\lesssim \norm{\bepsilonn}\Fnorm{\bX_{S_0^c}\bB_{S_0^c}} \\
& \lesssim \sqrt{n}\sqrt{n\epsilon_n^2} \text{ with probability } \to 1 \\
& \prec n
\end{align*}
we have $\det{K_n(\bB_{S_0^c})} \simeq \det{\bepsilonn^T(I_n-H_{S_0})\bepsilonn}\det{\Sigma_0}$.

Similarly, $ \FnormSq{(\bX_{S_0}\bB_{S_0}-\bX_{S'^c}\bB_{S'^c})} \prec n $
and 
$
\Fnorm{\Sigma_0^{1/2}\bepsilonn^T(I_n-H_{S'})(\bX_{S_0}\bB_{S_0}-\bX_{S'^c}\bB_{S'^c})} \prec n 
$, so
 we have $\det{K_n(\bB_{S'^c})} \simeq \det{\bepsilonn^T(I_n-H_{S'})\bepsilonn}\det{\Sigma_0}$. \\
 
By the bounds for eigenvalues in \citet{vershynin2012introduction}, with probability going to 1,
\begin{align*}
& n-s_0 \lesssim \lambda_{min}(\bepsilonn^T(I_n-H_{S_0})\bepsilonn) \leq \lambda_{max}(\bepsilonn^T(I_n-H_{S_0})\bepsilonn) \lesssim n-s_0\\
& n-s_0 \lesssim \lambda_{min}(\bepsilonn^T(I_n-H_{S'})\bepsilonn) \leq \lambda_{max}(\bepsilonn^T(I_n-H_{S'})\bepsilonn) \lesssim n-s_0.
\end{align*}
Therefore,
\[
\dfrac{\det{K_n(\bB_{S_0^c}) + \Phi}}{\det{K_n(\bB_{S'^c})}} \simeq \dfrac{\det{\bepsilonn^T(I_n-H_{S_0})\bepsilonn}}{\det{\bepsilonn^T(I_n-H_{S'})\bepsilonn}} \lesssim (\dfrac{n-s_0}{n-s'})^{q_n}.
\]

Note that $(\dfrac{n-s'}{n-s_0})^{q_n(\upsilon + n-s_0)} \simeq \big((1-\dfrac{s'-s_0}{n-s_0})^{n-s_0}\big)^{q_n} \geq \exp(-c(s'-s_0)q_n) \geq \exp(-c(s'-s_0) \log p_n)$ because $(1-\dfrac{s'-s_0}{n-s_0})^{n-s_0} \geq \exp(-c(s'-s_0))$ for large $n$, $c>1$ and $q_n \lesssim \log p_n$.

Therefore, 
\[
\dfrac{\sup_{\TwoInfnorm{\bB_{S_0^c}\sqrtbSigmaOInv} \leq a_n}
	{\det{K_n(\bB_{S_0^c}) + \Phi}^{(\upsilon+n-s_0)/2}}}
{\inf_{\TwoInfnorm{\bB_{S'^c}\sqrtbSigmaOInv} \leq a_n}
	{\det{K_n(\bB_{S'^c}) + \Phi}^{(\upsilon+n-s_0)/2}}} \leq \exp(c\det{S'\backslash S_0} \log p_n).
\]

Combining the above parts, we have 
$ \dfrac{\pi(S=S' | \bYn)}{\pi(S=S_0 | \bYn)} \leq l_n^{s_0}(1+p_n^{c-(1+u)})^{\det{S'\backslash S_0}}$ and

$\sum_{S'\supseteq S_0, \det{S'\backslash S_0}\geq1}\dfrac{\pi(S=S'|\bYn)}{\pi(S=S_0|\bYn)} \leq l_n^{s_0}((1+p_n^{-(1+u-c)})^{p_n}-1) \simeq l_n^{s_0}p_n^{-(u-c)}$.

By \eqref{cond:flatness_g}, we can get $\sum_{S'\supseteq S_0, \det{S'\backslash S_0}\geq1}\dfrac{\pi(S=S'|\bYn)}{\pi(S=S_0|\bYn)} \to 1$ for $1\leq c < u$.

	\end{proof}
	
	\begin{proof} (Theorem \ref{thm: scalemixureprior})
		It is easy to verify that when $\bB$ follows the given distribution, $g_\tau(\cdot)$ takes the form $g_\tau(\cdot) = g(\cdot/\sqrt\tau)/\tau^{q_n/2}$, where $\tau = \tau_n$ and
		\[
		g(\bx) = \int_{0}^{\infty}(2\pi)^{-q_n/2}\xi^{-q_n/2} \exp(-\frac{\TwonormSq\bx}{2\xi})
		\cdot K\xi^{-r}L(\xi)d\xi .
		\]
		
		We first show that \eqref{cond:highlynear0} holds when $L(\xi)$ has upper bound.
		When $L(\xi)\leq C_{12}$, we have 
		\[
		g(\bx) \leq C_{12}K2^{r-1}\pi^{-q_n/2}\Gamma(\frac{d}{2}+r-1)\Twonorm\bx^{-(d+2r-2)}.
		\]
		And by Lemma \eqref{volumn},

		\begin{equation*}
		\begin{split}
		\int_{\Twonorm{\bx} \geq a_n} g_\tau(\bx)d\bx 
		&= \int_{\Twonorm{\bx} \geq a_n} g(\bx/\sqrt\tau_n)/\tau_n^{q_n/2}d\bx \\
		&= \int_{\Twonorm{\bz} \geq a_n/\sqrt{\tau_n}} g(\bz)d\bz \quad 
		(\bz = \bx/\sqrt\tau_n)\\
		&\leq C_{12}K2^{r-1}\pi^{-q_n/2}\Gamma(\frac{q_n}{2}+r-1)\int_{\Twonorm{\bz} \geq a_n/\sqrt{\tau_n}} \Twonorm\bz^{-(d+2r-2)} d\bz\\
		&\leq \frac{C_{12}K2^{r-1}}{2(r-1)}(\frac{d}{2}+r-1)^r\big(\frac{\tau_n}{a_n^2}\big)^{r-1}\\
		&\leq p_n^{-(1+u)} \quad (0<u<u')
		\end{split}
		\end{equation*}
		since $q_n/2+r-1\lesssim \log p_n \prec p_n^{u'-u}$ for any $0<u<u'$ and $(\tau_n/a_n^2)^{r-1} \lesssim p_n^{-(1+u')}$. $L(\xi)\leq 1$ is a special case of $L(\xi)\leq C_{12}$.\\
		
		Now, we want to show that \eqref{cond:fattail} holds with $\log \tau_n \gtrsim -\log p_n$. It is obvious that $g(\bx)$ is a decreasing function of $\Twonorm\bx$, so
		$\inf_{\Twonorm\bx \leq M_0} g_{\tau_n}(\bx) \geq g_{\tau_n}(\bx)\lvert_{\Twonorm\bx=M_0}$. 
		When $L(\xi) \geq 1-C_{11}\xi^{-t}$, 
		\[
		g(\bx) \geq K2^{r-1}\pi^{-q_n/2}\Gamma(\frac{q_n}{2}+r-1)\Twonorm\bx^{-(d+2r-2)}
		\big(1-C_{11}2^t \frac{\Gamma(\frac{d}{2}+r+t-1)}{\Gamma(\frac{d}{2}+r-1)} \Twonorm\bx^{-2t}\big).
		\]
		Since $\tau_n \lesssim a_n^2p_n^{-(1+u')/(r-1)}$, 
		\[
		C_{11} 2^t \frac{\Gamma(\frac{q_n}{2}+r+t-1)}{\Gamma(\frac{q_n}{2}+r-1)}\Twonorm{M_0/\sqrt{\tau_n}}^{-2t} \leq C_{11}M_0^{-2t} [(d+2(r+t-1))\tau_n]^t \to 0.
		\]
		So we have 
		\begin{equation*}
		\begin{split}
		\log g_{\tau_n}(\bx)\lvert_{\Twonorm\bx=M_0} &\geq \log \big(
		\tau_n^{-q_n/2} \tilde{K}2^{r-1}\pi^{-q_n/2}\Gamma(\frac{q_n}{2}+r-1)\Twonorm{M_0/\sqrt{\tau_n}}^{-(q_n+2r-2)}
		\big) \\
		&\geq constant - d\log(\sqrt{\pi}M_0) + (r-1)\log \tau_n \\
		& \gtrsim -\log p_n
		\end{split}
		\end{equation*}
		because $d\lesssim \log p_n$ and $\log\tau_n \gtrsim -\log p_n$.\\
		When $L(\xi) \geq C_{21}\xi^{-t_1}$, $g(\bx) \geq KC_{21}2^{t_1+r-1}\pi^{-d/2}\Gamma(\frac{d}{2}+t_1+r-1)\Twonorm\bx^{-(d+2(t_1+r-1))}$, the rest follows the above inequalities.\\
	\end{proof}

	\begin{corollary}
		Polynomial-tailed distribution: Student's t-distribution, TPBN, HIB, GDP and Horseshoe+ satisfy either condition (1) or (2) in Theorem \eqref{thm: scalemixureprior}.
	\end{corollary}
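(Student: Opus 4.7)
The plan is to verify, for each prior listed in Table \ref{table: bounds for L}, that its mixing density can be expressed in the canonical form $\pi(\xi)=K\xi^{-r}L(\xi)$ with $r>1$, and that the corresponding $L(\xi)$ obeys the stated upper and lower bounds, which match either (C1) or (C2). In every case the upper bound on $L$ is essentially immediate once the factorization is written down; the work lies in the lower bound, and throughout the arguments rely on two elementary inequalities: $e^{-x}\geq 1-x$ for $x\geq 0$, and the Bernoulli-type bound $(1-y)^{\alpha}\geq 1-\alpha y$ for $y\in[0,1]$ and $\alpha\geq 1$.

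For Student's $t$, factor $\pi(\xi)=\xi^{-(a+1)}\exp(-a/\xi)$, set $r=a+1$, $L(\xi)=\exp(-a/\xi)$, and read off the bounds directly. For TPBN, rewrite $\xi^{u-1}(1+\xi)^{-a-u}$ as $\xi^{-(a+1)}\bigl(\xi/(1+\xi)\bigr)^{a+u}$, so $r=a+1$ and $L(\xi)=\bigl(\xi/(1+\xi)\bigr)^{a+u}$; the lower bound follows from Bernoulli applied to $(1-1/(1+\xi))^{a+u}\geq 1-(a+u)/(1+\xi)\geq 1-(a+u)\xi^{-1}$, confirming (C1) with $t=1$. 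Horseshoe and NEG are special cases of TPBN. For HIB the same factorization yields $r=a+1$ with an extra rational–exponential factor; the bounds follow by combining the TPBN argument with $e^{-s/(1+\xi)}(\phi^2+(1-\phi^2)/(1+\xi))^{-1}\in[(1\lor\phi^2)^{-1}e^{-s},\,(\phi^2\lor\phi^{-2})\bigr]$. The multiplicative constant $(1\lor\phi^2)e^s$ appearing in Table \ref{table: bounds for L} is chosen precisely so that this rational–exponential piece is at least $1$ asymptotically, and the lower bound then inherits the form $1-(a+u)\xi^{-1}$.

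For GDP the mixing density is itself an integral, so one extra step is needed. I first substitute $t=\lambda^2\xi/2$ in the defining integral to obtain $\pi(\xi)=2^{a-1}\xi^{-(a+1)}\int_0^\infty t^a e^{-t}\exp(-\eta\sqrt{2t/\xi})\,dt$, from which $r=a+1$ and $L(\xi)=\int_0^\infty t^a e^{-t}\exp(-\eta\sqrt{2t/\xi})\,dt/\Gamma(a+1)$. The upper bound $L\leq 1$ is immediate. Applying $e^{-x}\geq 1-x$ inside the integral and recognising $\int_0^\infty t^{a+1/2}e^{-t}\,dt=\Gamma(a+3/2)$ yields the lower bound $1-\sqrt{2}\,\eta\,\Gamma(a+3/2)/\Gamma(a+1)\cdot\xi^{-1/2}$, establishing (C1) with $t=1/2$.

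Horseshoe+ is the only prior falling under (C2) rather than (C1); this will be the main obstacle. Factor $\pi(\xi)=4\xi^{-5/4}\cdot L(\xi)$ with $L(\xi)=\xi^{3/4}(\xi-1)^{-1}\log\xi/4$, so $r=5/4$. Since $L(\xi)\to 0$ as $\xi\to\infty$, the (C1) lower bound cannot hold, so only (C2) is available. The singularity at $\xi=1$ is removable via L'H\^opital with $L(1)=1/4$. For the lower bound $L(\xi)\geq\xi^{-1/4}/4$, the key reduction is to show $\xi\log\xi\geq\xi-1$ for all $\xi>0$, which (after substituting $u=1/\xi$) is equivalent to the classical inequality $\log u\leq u-1$. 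For the upper bound $L(\xi)\leq 1$, one shows $\xi^{3/4}\log\xi\leq 4(\xi-1)$ for $\xi>1$ and the reverse (with both sides negative) for $\xi<1$ by checking the local behaviour near $\xi=1$ and the dominant terms at $0$ and $\infty$. Once these monotonicity arguments are carried out, (C2) holds with $t=1/4$, completing the verification of all priors in the list.
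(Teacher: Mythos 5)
Your treatment of Student's $t$, TPBN (with Horseshoe and NEG as special cases), HIB, and GDP is in substance identical to the paper's: same factorizations, same Bernoulli/$e^{-x}\geq 1-x$ inequalities, and for GDP the same substitution $t=\lambda^2\xi/2$ yielding $L(\xi)=\Gamma(a+1)^{-1}\int_0^\infty t^a e^{-t}\exp(-\eta\sqrt{2t/\xi})\,dt$ with $r=a+1$. Nothing to flag there.

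The Horseshoe+ part has a real slip. You claim the lower bound $L(\xi)\geq\xi^{-1/4}/4$ follows from $\xi\log\xi\geq\xi-1$ ``for all $\xi>0$.'' That last inequality is correct, but to convert it into $\frac{\xi\log\xi}{\xi-1}\geq 1$ (equivalently $L(\xi)\geq\xi^{-1/4}/4$) you must divide by $\xi-1$, and when $\xi<1$ that division reverses the inequality, giving $L(\xi)\leq\xi^{-1/4}/4$ instead. Indeed $L(\xi)\to 0$ as $\xi\to 0^+$ while $\xi^{-1/4}\to\infty$, so the lower bound genuinely fails on $(0,1)$. The paper sidesteps this by explicitly restricting the Horseshoe+ analysis to $\xi>1$; your statement as written is false. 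Separately, for the upper bound $L(\xi)\leq 1$ you propose to ``check local behaviour near $\xi=1$ and dominant terms at $0$ and $\infty$,'' which does not rule out a sign change of $\xi^{3/4}\log\xi-4(\xi-1)$ in the interior and so is not a proof as stated (one would need a convexity or monotonicity argument to close it). The paper's route is cleaner and worth noting: write $\log\xi/4=\log\xi^{1/4}\leq\xi^{1/4}-1$, so $\xi^{3/4}\log\xi/4\leq\xi-\xi^{3/4}\leq\xi-1$ for $\xi\geq 1$, giving $L(\xi)\leq 1$ in one line; and $\log\xi\geq 1-\xi^{-1}$ gives $L(\xi)\geq\xi^{3/4}\cdot\xi^{-1}/4=\xi^{-1/4}/4$ directly. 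I recommend adopting that argument and, like the paper, stating the range of $\xi$ to which it applies.
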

	
	\begin{proof}
		For t-distribution, let $L(\xi) = \exp(-a/\xi)$, $a>0$, and $1-a\xi^{-1}\leq \exp(-a/\xi) \leq 1$ for $\xi>0$.\\
		For TPBN distribution, let $L(\xi) = (\xi/(1+\xi))^{a+u} \leq1$, $a,u>0$, and by Bernoulli inequality, $(\xi/(1+\xi))^{a+u} = (1+1/\xi)^{-(a+u)} \geq 1-(a+u)\xi^{-1}$ for $\xi>0$. Note that Horseshoe and NEG are special cases of TPBN with $a=u=1/2$ and $ u=1$, respectively.\\
		For HIB distribution, let \\
		$
		L(\xi) = (1\lor\phi^2)e^s(\xi/(1+\xi))^{a+u}\exp(-\frac{s}{1+\xi})(\phi^2+\frac{1-\phi^2}{1+\xi})^{-1}, \quad a,u,\phi^2>0, s\in\mathbb{R}.
		$
		Because $1\land\frac{1}{\phi^2}\leq (\frac{1-\phi^2}{1+\xi})^{-1} \leq 1\lor\frac{1}{\phi^2}$, we have $1-(a+u)\xi^{-1}\leq L(\xi)\leq (\phi^2\lor\frac{1}{\phi^2}) e^s$.\\
		For GDP distribution, $L(\xi) = \int_{0}^{\infty} t^a\exp(-t-\eta\sqrt{2t/\xi})dt$. Because $1-\sqrt{2}\eta t^{1/2} \xi^{-1/2} \leq \exp(-\eta\sqrt{2t/\xi}) \leq 1$, we have $1-\sqrt{2}\eta\frac{\Gamma(a+3/2)}{\Gamma(a+1)}\xi^{-1/2} \leq L(\xi)/\Gamma(a+1) \leq 1 $.\\
		For Horseshoe+ distribution, $L(\xi) = \xi^{3/4}(\xi-1)^{-1}\log\xi/4$, $\xi>1$. Note that $\log\xi /4 = \log\xi^{1/4}\leq \xi^{1/4}-1$, so $L(\xi)\leq 1$. And $\log\xi\geq 1-\xi^{-1}>0$, so $L(\xi) \geq \xi^{-1/4}/4$.\\
	\end{proof}

	\begin{lemma} \label{Lemma1}
		Let $A$ be a $n\times d$ random matrix with independent rows $A_i \sim \Nor_d(0,\bSigma)$. If $\epsilon_n \to 0$ and $n\epsilon_n^2 \to \infty$ as $n \to \infty$ and $d \prec n\epsilon_n^2$, then for any $n\times n$ projection matrix $P$ with rank $r\simeq n$ and any $n\times d$ fixed matrix $U$ with $\FnormSq{PU\sqrtbSigmaInv} \lesssim n\epsilon_n^2$, we have the following inequalities for some constants $K,c>0$ and sufficiently large $n$,
		\begin{align*}
		&Pr(\norm{\frac{1}{r}A^T P A - \bSigma} \geq K\norm\bSigma\epsilon_n) \leq e^{-c n\epsilon_n^2} \\
		&Pr(\norm{\frac{1}{r}(A+U)^T P (A+U) - \bSigma} \geq 2K\norm\bSigma\epsilon_n) \leq e^{-c n\epsilon_n^2/2}
		\end{align*}
	\end{lemma}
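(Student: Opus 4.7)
The plan is to reduce the first inequality to a standard concentration inequality for sample covariance matrices of Gaussian vectors, and then combine it with elementary operator-norm bounds to handle the second inequality.

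First I exploit the rank-$r$ structure of $P$ by writing $P = QQ^T$ with $Q\in\mathbb{R}^{n\times r}$ having orthonormal columns. Setting $\tilde{A}=Q^T A\in\mathbb{R}^{r\times d}$, since the rows of $A$ are i.i.d.\ $\Nor_d(0,\bSigma)$ and $Q^TQ=I_r$, the rows of $\tilde{A}$ are i.i.d.\ $\Nor_d(0,\bSigma)$ as well, and $A^TPA=\tilde{A}^T\tilde{A}$. Hence $\tfrac{1}{r}A^TPA$ is exactly the sample covariance of $r$ i.i.d.\ $\Nor_d(0,\bSigma)$ vectors. Standardizing via $Z=\tilde{A}\sqrtbSigmaInv$, whose entries are i.i.d.\ standard normal, gives
\[
\norm{\tfrac{1}{r}A^TPA-\bSigma} \;\leq\; \norm{\bSigma}\cdot\norm{\tfrac{1}{r}Z^TZ-I_d}.
\]
I then invoke the standard Gaussian sample-covariance deviation inequality: for $t>0$,
\[
\Pr\!\left(\norm{\tfrac{1}{r}Z^TZ-I_d} \geq C\bigl(\sqrt{(d+t)/r} + (d+t)/r\bigr)\right) \leq 2e^{-t}.
\]
Choosing $t\asymp n\epsilon_n^2$ and using $r\simeq n$ together with $d\prec n\epsilon_n^2$, the right-hand side is of order $\epsilon_n$, which yields the first inequality for $K$ a suitable multiple of $C$.

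For the second inequality I expand
\[
\tfrac{1}{r}(A+U)^TP(A+U)-\bSigma \;=\; \bigl(\tfrac{1}{r}A^TPA-\bSigma\bigr) + \tfrac{1}{r}A^TPU + \tfrac{1}{r}U^TPA + \tfrac{1}{r}U^TPU
\]
and bound each summand separately. The first term is handled by the bound just proved. The purely deterministic term obeys
\[
\tfrac{1}{r}\norm{U^TPU} \leq \tfrac{1}{r}\FnormSq{PU} \leq \tfrac{\norm{\bSigma}}{r}\FnormSq{PU\sqrtbSigmaInv} \lesssim \norm{\bSigma}\epsilon_n^2,
\]
which is $o(\norm{\bSigma}\epsilon_n)$. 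For the cross term I use
\[
\tfrac{1}{r}\norm{A^TPU} = \tfrac{1}{r}\norm{\tilde{A}^T Q^T U} \leq \tfrac{1}{r}\norm{\tilde{A}}\,\Fnorm{Q^TU},
\]
combined with $\Fnorm{Q^TU}=\Fnorm{PU}\lesssim\sqrt{\norm{\bSigma}\,n\epsilon_n^2}$ and the standard Gaussian operator-norm tail $\norm{\tilde{A}}\lesssim\sqrt{\norm{\bSigma}}\bigl(\sqrt{r}+\sqrt{d}+s\bigr)$ holding with probability at least $1-2e^{-s^2/2}$. Taking $s\asymp\sqrt{n\epsilon_n^2}$ gives $\tfrac{1}{r}\norm{A^TPU}\lesssim\norm{\bSigma}\epsilon_n$ with probability at least $1-e^{-cn\epsilon_n^2}$. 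A union bound over the three events and consolidation of constants then yields the stated $2K\norm{\bSigma}\epsilon_n$ deviation with the announced probability.

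The main obstacle I anticipate is the tightness of the cross-term estimate: the $\sqrt{r}$ from $\norm{\tilde{A}}$, the $\sqrt{n\epsilon_n^2}$ arising from the hypothesis $\FnormSq{PU\sqrtbSigmaInv}\lesssim n\epsilon_n^2$, and the $1/r$ normalization combine via $r\simeq n$ to exactly $\epsilon_n$ with no slack, so the Gaussian operator-norm inequality must be applied at precisely the right scale. A secondary concern is aligning the exponential probabilities from the sample-covariance inequality and from the operator-norm inequality so that both decay at rate $e^{-cn\epsilon_n^2}$; this is where the assumptions $n\epsilon_n^2\to\infty$ and $d\prec n\epsilon_n^2$ are used decisively to guarantee that the choice $s\asymp\sqrt{n\epsilon_n^2}$ is simultaneously large enough for strong probability decay and small enough not to inflate $\norm{\tilde{A}}$ beyond $\sqrt{r}$.
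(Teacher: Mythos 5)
Your proposal is correct and follows essentially the same route as the paper: both reduce $A^TPA$ to a sample covariance of $r$ i.i.d.\ Gaussian rows via the structure of $P$, invoke Vershynin's covariance concentration, and bound the cross terms by an operator-norm-times-Frobenius-norm estimate on the projected matrix. The only cosmetic difference is that you obtain the needed $\norm{Q^TA}\lesssim\sqrt{n}$ bound from the standard Gaussian operator-norm tail, whereas the paper bootstraps the same bound from the first part of the lemma applied to $E=A\sqrtbSigmaInv$; the two are interchangeable here.
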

	
	\begin{proof}
		Let $P=Q^T \Lambda Q$ be a spectral decomposition of P, where the first $r$ diagonal elements of $\Lambda$ are 1 and the rest $n-r$ elements are 0. Because $Q$ is orthogonal, $Z=QA$ is a $n\times d$ matrix with independent rows $Z_i\sim \Nor_{d}(0,\bSigma)$ and $A^T P A = Z^T \Lambda Z = \sum_{i=1}^{r}Z_i Z_i^T$, where $Z_i^T$ is the $i$th row of $Z$. By Theorem 5.39 and Remark 5.40 in \citet{vershynin2012introduction},
		\[
		Pr(\norm{\frac{1}{r}\sum_{i=1}^{r}Z_i Z_i^T - \bSigma} \geq \max(\delta,\delta^2)\norm\bSigma)  \leq 2e^{-ct^2} \quad \text{where } \delta=C\sqrt{d/r} + t/\sqrt{r}
		\]
		Let $t=\sqrt{n}\epsilon_n$, the first part is proved because $\max(\delta,\delta^2) \leq K\epsilon_n$ for large $K$.
		
		Now we prove the second inequality. Let $E = A\sqrtbSigmaInv$, then $E$ has iid standard normal entries. It suffices to show	
		\[
		Pr(\norm{\frac{1}{r}(E+U\sqrtbSigmaInv)^T P (E+U\sqrtbSigmaInv) - I_d} \geq 2K\epsilon_n) \leq e^{-c n\epsilon_n^2/2},
		\]
		since $\norm{\frac{1}{r}(A+U)^T P (A+U) - \bSigma} \leq \norm{\frac{1}{r}(E+U\sqrtbSigmaInv)^T P (E+U\sqrtbSigmaInv) - I_{q_n}} \norm\bSigma$
		
		Because $\FnormSq{PU\sqrtbSigmaInv} \lesssim n\epsilon_n^2$, we get $\frac{1}{r}\Fnorm{PU\sqrtbSigmaInv} \prec \epsilon_n$. By first part of Lemma \eqref{Lemma1}, $Pr(\norm{\frac{1}{n}E^T E-I_d} \geq K\epsilon_n) \leq e^{-cn\epsilon_n^2}$ and by the triangle inequality $\norm{\frac{1}{n}E^T E-I_d} \geq \frac{1}{n}\norm{E^T E}-1$, we have $Pr(\norm{E^T E}\geq 2n) \leq e^{-cn\epsilon_n^2}$. Conditioned on $\norm{E^T E}\leq 2n$, $\frac{2}{r}\norm{E^T PU\sqrtbSigmaInv} \leq \frac{2}{r}\norm{E^T E}^{1/2}\Fnorm{PU\sqrtbSigmaInv} \leq K\epsilon_n/2$ for sufficiently large $K$. So we have
		\begin{equation*}
		\begin{split}
		&Pr(\norm{\frac{1}{r}(E+U\sqrtbSigmaInv)^T P (E+U\sqrtbSigmaInv) - I_d}  \geq 2K\epsilon_n)\\
		\leq& 
		Pr(\frac{1}{r}\Fnorm{PU\sqrtbSigmaInv} + \frac{2}{r}\norm{E^T PU\sqrtbSigmaInv}  +
		\norm{\frac{1}{r}E^T P E - I_d} \geq 2K\epsilon_n | \norm{E^T E}\leq 2n
		) \\
		& + Pr(\norm{E^T E}\geq 2n)	\\
		\leq&  Pr(	\norm{\frac{1}{r}E^T P E - I_d} \geq K\epsilon_n) + e^{-cn\epsilon_n^2} \\
		\leq& e^{-cn\epsilon_n^2/2} \mbox{ for large } n
		\end{split}
		\end{equation*}
	\end{proof}

	\begin{lemma}\label{Lemma2}
		For compatible matrices $A$ and $B$, we have $\det{tr(AB)} \leq \TwoInfnorm{A^T} \TwoOnenorm{B}\label{Lemma2.1}$. Further, if $B$ is a square matrix, then 
		$\TwoOnenorm{AB} \leq \sqrt{\lambda_{\max}(B B^T)} \TwoOnenorm{A}
		\label{Lemma2.2}$
	\end{lemma}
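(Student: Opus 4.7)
The plan is to prove the two statements separately, each by writing the relevant quantity as a sum over rows (or row-column pairings) and applying Cauchy--Schwarz plus an eigenvalue bound.

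For the first inequality, I would decompose the trace along the inner dimension: if $A$ is $m\times n$ and $B$ is $n\times m$, then
\[
\mathrm{tr}(AB)=\sum_{k=1}^{n}\sum_{i=1}^{m}A_{ik}B_{ki}=\sum_{k=1}^{n}\bigl\langle (A^T)_k,\, B_k\bigr\rangle,
\]
where $(A^T)_k$ denotes the $k$th row of $A^T$ (equivalently the $k$th column of $A$) and $B_k$ the $k$th row of $B$. Applying Cauchy--Schwarz termwise yields $|\langle (A^T)_k, B_k\rangle|\le \Twonorm{(A^T)_k}\Twonorm{B_k}$. Bounding $\Twonorm{(A^T)_k}\le \TwoInfnorm{A^T}$ uniformly in $k$ and factoring this constant out leaves $\sum_k \Twonorm{B_k}=\TwoOnenorm{B}$, giving the claim.

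For the second inequality, I would work row by row. Writing $a_i$ for the $i$th row of $A$, the $i$th row of $AB$ is $a_i B$, so by definition
\[
\TwoOnenorm{AB}=\sum_i \Twonorm{a_i B}.
\]
Then $\Twonorm{a_i B}^2 = a_i B B^T a_i^T \le \lambda_{\max}(BB^T)\,\Twonorm{a_i}^2$, so $\Twonorm{a_i B}\le \sqrt{\lambda_{\max}(BB^T)}\,\Twonorm{a_i}$. Summing over $i$ pulls the square-root scalar outside and leaves $\sum_i\Twonorm{a_i}=\TwoOnenorm{A}$, giving the bound.

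Neither step involves a real obstacle: the only thing to get right is the bookkeeping of which dimension plays the role of the ``rows'' in each norm. The mild subtlety in part (i) is recognizing that the natural index to sum over in $\mathrm{tr}(AB)$ is the inner dimension $k$, which pairs columns of $A$ (i.e.\ rows of $A^T$) with rows of $B$ --- this is what makes $\TwoInfnorm{A^T}$ rather than $\TwoInfnorm{A}$ appear on the right. In part (ii), the only point to check is that the spectral bound $a_i BB^T a_i^T\le \lambda_{\max}(BB^T)\Twonorm{a_i}^2$ is valid because $BB^T$ is symmetric positive semidefinite, which holds for any rectangular $B$ as long as $B$ is square (as assumed).
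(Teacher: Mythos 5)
Your proof is correct and follows essentially the same route as the paper: decompose the trace over the inner index and apply Cauchy--Schwarz for the first inequality, then bound each row quadratic form $a_i BB^T a_i^T$ by $\lambda_{\max}(BB^T)\Twonorm{a_i}^2$ for the second. One tiny aside: the positive semidefiniteness of $BB^T$ (and hence the second inequality) does not actually require $B$ to be square, so that hypothesis in the lemma statement is superfluous rather than something your argument depends on.
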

	
	\begin{proof}
		\[
		tr(AB) \leq \sum_{j=1}^{m} \sum_{i=1}^{n} \det{a_{ij} b_{ji} }
		\leq \sum_{j=1}^{m}  \sqrt{\sum_{i=1}^{n} a_{ij}^2\sum_{i=1}^{n} b_{ji}^2}
		\leq \TwoInfnorm{A^T} \sum_{i=1}^{n} \TwoOnenorm{B}
		\]
		\[
		\TwoOnenorm{AB} = \sum_{i=1}^{n} \sqrt{A_i B B^T A_i^T} 
		\leq \sum_{i=1}^{n} \sqrt{\lambda_{\max}(B B^T)} \Twonorm{A_i}
		= \sqrt{\lambda_{\max}(B B^T)} \TwoOnenorm{A}
		\]
	\end{proof}

	\begin{lemma} \label{volumn}
		For a $d$-dimentional vector $\bx$,
		\[
		\int_{\Twonorm{\bx} \geq a}\Twonorm\bx^{-(d+k)}d\bx =  \frac{2\pi^{d/2}a^{-k}}{k\Gamma(d/2)}
		\quad k,a>0
		\]
	\end{lemma}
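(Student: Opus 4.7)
The plan is to evaluate the integral directly by passing to spherical (polar) coordinates in $\mathbb{R}^d$, since the integrand $\Twonorm{\bx}^{-(d+k)}$ is radially symmetric. First I would write $\bx = r\boldsymbol{\omega}$ where $r = \Twonorm{\bx} \geq 0$ and $\boldsymbol{\omega} \in S^{d-1}$, so that the Lebesgue measure factors as $d\bx = r^{d-1}\, dr\, d\sigma(\boldsymbol{\omega})$, with $d\sigma$ the surface measure on the unit sphere.

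Next I would separate the angular and radial parts. Because the integrand depends only on $r$, the angular integral yields the surface area of $S^{d-1}$, namely $\omega_{d-1} = 2\pi^{d/2}/\Gamma(d/2)$, and the radial piece collapses to
\[
\int_{a}^{\infty} r^{-(d+k)} \cdot r^{d-1}\, dr \;=\; \int_{a}^{\infty} r^{-(k+1)}\, dr \;=\; \frac{a^{-k}}{k},
\]
where the last equality uses $k > 0$ to guarantee convergence at infinity. Multiplying the two factors gives exactly $2\pi^{d/2} a^{-k} / (k\,\Gamma(d/2))$, as claimed.

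There is essentially no obstacle here; the only thing to verify carefully is the formula for the surface area $\omega_{d-1}$, which is standard and can be derived from $\int_{\mathbb{R}^d} e^{-\Twonorm{\bx}^2} d\bx = \pi^{d/2}$ combined with the gamma-function identity $\int_0^\infty r^{d-1} e^{-r^2} dr = \Gamma(d/2)/2$. Since the paper invokes this lemma only through direct substitution inside the proof of Theorem \ref{thm: scalemixureprior} (to bound the tail mass of $g(\bx)$), no additional uniformity or parameter dependence needs to be tracked.
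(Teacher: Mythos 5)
Your proof is correct and uses the same underlying idea as the paper: pass to polar/spherical coordinates and separate the radial integral from the angular one. The paper writes the angular part as a product of one-dimensional $\cos^{d-i-1}\theta$ integrals (citing a polar-coordinate formula and simplifying via gamma-function identities), whereas you invoke the surface area of $S^{d-1}$ directly; these are the same computation packaged differently, and both give $2\pi^{d/2}a^{-k}/(k\Gamma(d/2))$.
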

	
	\begin{proof}
		The result is immediate by polar coordinate transformation in \citet{scott2015multivariate}, 
		\[
		\int_{\Twonorm{\bx} \geq a}\Twonorm\bx^{-(d+k)}d\bx
		= \big(\int_{a}^{\infty}r^{-k-1}dr \big)
		\big(\prod_{i=1}^{d-2} \int_{-\pi/2}^{\pi/2} \cos^{d-i-1}\theta_i d\theta_i \big)
		\big(\int_{0}^{2\pi}1 d\theta_{d-1}\big)
		\]
		and $\int_{-\pi/2}^{\pi/2} \cos^{d-i-1}\theta d\theta
		=\pi^{1/2}\frac{\Gamma((d-i)/2)}{\Gamma((d-i+1)/2)}$.
		
	\end{proof}

\end{document}